\def\NZQ{\mathbb}               % the font for N,Z,Q,R,C
\def\NN{{\NZQ N}}
\def\ZZ{{\NZQ Z}}
\def\RR{{\NZQ R}}
\newtheorem{Theorem}{Theorem}[section]
\newtheorem{Lemma}[Theorem]{Lemma}
\newtheorem{Corollary}[Theorem]{Corollary}
\newtheorem{Remark}[Theorem]{Remark}
\newtheorem{Example}[Theorem]{Example}
\newtheorem{Definition}[Theorem]{Definition}
\let\epsilon\varepsilon
\let\phi=\varphi
\let\kappa=\varkappa
\begin{document}
\title{Multiplicities Associated to Graded Families of Ideals}
\author{Steven Dale Cutkosky }
\thanks{Partially supported by NSF}

\address{Steven Dale Cutkosky, Department of Mathematics,
University of Missouri, Columbia, MO 65211, USA}
\email{cutkoskys@missouri.edu}

\begin{abstract}
We prove that limits of multiplicities associated to graded families of ideals exist under very general conditions. Most of our results hold for analytically unramified  equicharacteristic local rings, with perfect residue fields. We give a number of applications, including a "volume = multiplicity" formula, generalizing the formula of Lazarsfeld and Mustata, and a proof that the epsilon multiplicity of Ulrich and Validashti exists as a limit for ideals in rather general rings, including analytic local domains. We prove a generalization of this to generalized symbolic powers of ideals, proposed by Herzog, Puthenpurakal and Verma. We also prove an asymptotic "additivity formula" for limits of multiplicities, and a formula on limiting growth of valuations, which answers a question posed by the author, Kia Dalili and Olga Kashcheyeva. Our proofs are inspired by a philosophy of Okounkov, for computing limits of multiplicities as the volume of a slice of an appropriate cone generated by a semigroup determined by an appropriate filtration on a family of algebraic objects. 
\end{abstract}

\maketitle

\section{Introduction}  
In a series of papers, Okounkov interprets the asymptotic multiplicity of graded families of algebraic objects in terms of the volume of a slice of a corresponding cone (the Okounkov body).  Okounkov's method  employs an asymptotic version of a result of Khovanskii for finitely generated semigroups \cite{Kh}. One of his realizations of this philosophy \cite{Ok1}, \cite{Ok}  gives a construction which computes the volume of a family of graded linear systems. This method was systematically developed by Lazarsfeld and Mustata in \cite{LM}, where many interesting consequences are given, including
a new proof of Fujita approximation (the original proof is in \cite{Fuj}), and the fact that the volume of a big divisor on an irreducible  projective variety over an algebraically closed field is 
a limit, which was earlier proven in \cite{La} using Fujita approximation. More recently, Fulger \cite{Fu}, has extended this result to compute local volumes of   divisors on a log resolution of a normal variety over an algebraically closed field.
Kaveh and Khovanskii have recently greatly generalized the theory of Newton-Okounkov bodies, and  applied this to general graded families of linear systems \cite{KK}.

The method used in these papers is to  choose a nonsingular closed point $\beta$ on the $d$-dimensional variety $X$, and then using a flag, a sequence of subvarieties
$$
\{\beta\}=X_0\subset X_1\subset \cdots \subset X_{d-1}\subset X
$$
which are nonsingular at $\beta$,
to determine a rank $d$ valuation of the function field $k(X)$ that dominates the regular local ring $\mathcal O_{X,\beta}$. This valuation gives a very simple
filtration of $\mathcal O_{X,\beta}$, represented  by monomials in a regular system of parameters of $\mathcal O_{X,p}$, which are   local equations of the flag.
Since the residue field is algebraically closed, this allows us to associate a set of points in $\ZZ^d$  to a linear system on $X$
(by means of a $k$-subspace of $k(X)$ giving the linear system), 
so that the number of these points is equal to the dimension of the linear system. In this way,   a semigroup
in $\ZZ^{d+1}$ is associated to a graded family of linear systems.

One of their applications is to prove a formula of equality of volume and multiplicity for a  graded family $\{I_i\}_{i\in\NN}$ of $m_R$-primary ideals in a
local ring $(R, m_R)$ such that $R$ is a local domain which is essentially of finite type over an algebraically closed field $k$ with $R/m_R=k$ (Theorem 3.8 \cite{LM}).
These assumptions on $R$ are all necessary for their proof. The proof involves interpreting the problem in terms of graded families of linear systems on
a projective variety $X$ on which $R$ is the local ring of a closed point $\alpha$. Then a valuation as above is constructed which is centered at a nonsingular point $\beta\in X$, and the cone methods are used to prove the limit. The formula ``volume equals multiplicity'' for graded families of ideals was first proven by
Ein, Lazarsfeld and Smith for valuation ideals associated to an Abhyankar valuation in a regular local ring which is essentially of finite type over a field in \cite{ELS}. Mustata proved the formula for regular local rings containing a field in \cite{Mus}. In all of these cases, the volume ${\rm vol}(I_*)$
of the family, which is defined as a limsup, is shown to be a limit.

Let $\{I_i\}$ be  a graded family of ideals in a $d$-dimensional (Noetherian) local ring $(R,m_R)$; that is, the family is indexed by the natural numbers, with $I_0=R$ and
$I_iI_j\subset I_{i+j}$ for all $i,j$. Suppose that the ideals are $m_R$-primary (for $i>0$). Let $\ell_R(N)$ denote the length of an $R$-module $N$.
We find very general conditions on $R$ under which the ``volume''
$$
{\rm vol}(I_*)= \limsup\frac{\ell_R(R/I_n)}{n^d/d!}
$$
is actually a limit. For instance, we show that this limit exists if $R$ is analytically unramified and equicharacteristic with perfect residue field (Theorem \ref{Theorem2}),
or if $R$ is regular (Theorem  \ref{Theorem0}). 

We thank the referee for pointing out that our basic result Theorem \ref{Theorem1} is valid without our original assumption of excellence.

Our proof involves reducing to the case of a complete domain, and then finding a suitable valuation which dominates $R$
to construct an Okounkov body. The valuation which we use is of rank 1 and rational rank $d$. There are two issues which require special care in the proof.
The first issue is to reduce to the case of an analytically irreducible domain.
Analytic irreducibility is necessary to handle the boundedness restriction on the corresponding cone (condition (\ref{Cone2})). The proof of boundedness is accomplished by using Huebl's linear Zariski subspace theorem \cite{Hu} (which is valid if $R$ is assumed excellent), or as was pointed out by the referee, by an application of Rees' version of Izumi's theorem \cite{R2}, for which excellence is not required. The second  issue is to handle the case of a nonclosed residue field. Our method
for converting the problem into a problem of cones requires that the residue field of the valuation ring be equal to the residue field of $R$. Care needs to be taken when the base field is not algebraically closed. The perfect condition in Theorem \ref{Theorem2} on the residue field is to prevent the introduction of nilpotents upon base change.

In the case when $I_n=I^n$ with $I$ an $m_R$-primary ideal, the limit $\lim_{ n \rightarrow \infty}\frac{\ell_R(R/I^n)}{n^d/d!}$ is just the Hilbert-Samuel
multiplicity $e(I)$, which is a positive integer. 
In general, when working with the kind of generality allowed by a graded family of $m_R$-primary ideals, the limit will be irrational. For instance, given $\lambda\in \RR_+$, the graded family of $m_R$-primary ideals $I_n$ generated by the monomials $x^iy^j$ such that $\frac{1}{2\lambda}i+j\ge n$ in the power series ring $R=k[[x,y]]$ in two variables will give us the
limit $\lim_{n\rightarrow \infty}\frac{\ell_R(R/I_n)}{n^2}=\lambda$.

We also obtain irrational limits for  more classical  families of ideals.
Suppose that $R$ is an excellent $d$-dimensional local domain with perfect residue field, and $\nu$ is a discrete valuation dominating $R$ (the value group is $\ZZ$).
Then the valuation ideals $I_n=\{f\in R\mid \nu(f)\ge n\}$ form a graded family of $m_R$-primary ideals, so Theorem \ref{Theorem2} tells us that the limit
$\lim_{n\rightarrow \infty}\frac{\ell_R(R/I_n)}{n^d}$ exists. This limit will however in general not be rational. Example 6 of  \cite{CS} gives such an example, in a three dimensional normal local ring.

We give a number of applications of this formula and these techniques to the computation of  limits in commutative algebra.

We prove the formula ``${\rm vol}(I_*)={\rm multiplicity}(I_*)$''' for local rings $R$ and graded families of $m_R$-primary ideals such that  either $R$ is regular, or $R$ is analytically unramified
and equicharacteristic with perfect residue field in Theorem \ref{Theorem5}. In our proof, we use a critical result on volumes of cones, which is derived in \cite{LM}. We generalize this result to obtain an asymptotic additivity formula for multiplicities of an arbitrary graded family of ideals (not required to be  $m_R$-primary) in Theorem \ref{Theorem3}.

Another application is to show that the epsilon multiplicity of Ulrich and Validashti \cite{UV}, defined as a limsup, is actually a limit in some new situations. 
We prove that this limit exists for graded families of ideals, in a  local ring $R$ such that one of the following holds: $R$ is regular, $R$ is analytically irreducible and excellent with algebraically closed residue field or $R$ is normal, excellent and equicharacteristic with perfect residue field.
As an immediate consequence, we obtain the existence of the limit for graded families of ideals in an analytic local domain, which is of interest in
singularity theory. 
In \cite{CHST}, an example is given showing that this limit is in general not rational. 
Previously, the limit was shown to exist in some cases in \cite{CHS}, and the existence of the limit was proven (for more general modules) over a domain $R$ which is essentially of finite type over
a perfect field in \cite{C}. The proof used Fujita approximation on a projective variety on which the ring $R$ was the local ring of a closed point.

We proof in Corollary \ref{Cor5} a formula on asymptotic multiplicity of generalized symbolic powers, proposed by Herzog, Puthenpurakal and Verma in the beginning  of
the introduction of \cite{HPV}.

We also prove that a question raised in \cite{CDK} about the growth of the semigroup of a valuation semigroup has a positive answer for very general valuations and domains. We prove in Theorem \ref{Theorem8} that if $R$ is a $d$-dimensional regular local ring or an analytically unramified local domain with algebraically closed residue field, and $\omega$ is a zero dimensional
rank one valuation dominating $R$, with value group contained in $\RR$, and if $\phi(n)$ is the number of elements in the semigroup of values attained on $R$
which are $<n$, then 
$$
\lim_{n\rightarrow \infty}\frac{\phi(n)}{n^d}
$$
exists. This formula was established if $R$ is a regular local ring of dimension 2 with algebraically closed residue field in \cite{CDK}, and if
$R$ is an arbitrary  regular local ring of dimension 2 in \cite{CV} using a detailed analysis of a generating sequence associated to the valuation.
Our proof of this result in general dimension follows, as an application of the existence of limits for graded families of $m_R$-primary ideals,  from the fact that $\phi(n)=\ell_R(R/I_n)$, where $I_n=\{f\in m_R\mid \nu(f)\ge n\}$ (\cite{CDK} and \cite{CT}). 
It is shown in \cite{CDK} that the limits 
$\lim_{n\rightarrow \infty}\frac{\phi(n)}{n^2}$ which can be attained on a regular local ring of dimension 2 are the real numbers $\beta$ with
$0\le \beta<\frac{1}{2}$.

We thank the referee for their careful reading of this paper, and for  suggesting that we  present the theorems
with the less restrictive assumption  of analytically unramified, instead of reduced and excellent.

\section{notation} $m_R$ will denote the maximal ideal of a local ring $R$. $Q(R)$ will denote the quotient field of a domain $R$.
$\ell_R(N)$ will denote the length of an $R$-module $N$.  $\ZZ_+$ denotes the positive integers and $\NN$ the nonnegative integers. 
Suppose that $x\in \RR$. $\lceil x \rceil$ is the smallest integer $n$ such  that $x\le n$. $\lfloor x \rfloor$ is the largest integer $n$ such that $n\le x$. 

We recall some notation on multiplicity from Chapter VIII, Section 10 of \cite{ZS2}, Section V-2 \cite{Se} and Section 4.6 \cite{BH}.
Suppose that $(R,m_R)$ is a (Noetherian) local ring,  $N$ is a finitely generated $R$-module with $r=\dim N$ and $a$ is an ideal of definition of $R$. Then
$$
e_a(N)=\lim_{k\rightarrow \infty}\frac{\ell_R(N/a^kN)}{k^r/r!}.
$$
We write $e(a)=e_a(R)$.

If $s\ge r=\dim N$, then we define 
$$
e_s(a,N)=\left\{
\begin{array}{ll}
 e_a(N)&\mbox{ if }\dim N=s\\
 0&\mbox{ if } \dim N<s.
 \end{array}\right.
 $$

A local ring is analytically unramified if its completion is reduced. In particular, a reduced excellent local ring is 
analytically unramified.

\section{Semigroups and Cones}

Suppose that $\Gamma\subset \NN^{d+1}$ is a semigroup. Set 
$$
\Sigma = \Sigma(\Gamma)=\mbox{ closed convex cone}(\Gamma)\subset \RR^{d+1},
$$
$$
\Delta=\Delta(\Gamma)=\Sigma\cap(\RR^d\times\{1\}).
$$
For $m\in \NN$, put
$$
\Gamma_m=\Gamma\cap (\NN^d\times\{m\}).
$$
which can be viewed as a subset of $\NN^d$. Consider the following three conditions on $\Gamma$:
\begin{equation}\label{Cone1}
\Gamma_0=\{0\}
\end{equation}
\begin{equation}\label{Cone2}
\begin{array}{l}
\mbox{There exist finitely many vectors $(v_i,1)$ spanning a semigroup $B\subset\NN^{d+1}$}\\
\mbox{such that $\Gamma\subset B$}
\end{array}
\end{equation}
Let $G(\Gamma)$ be the subgroup of $\ZZ^{d+1}$ generated by $\Gamma$. 
\begin{equation}\label{Cone3}
G(\Gamma)=\ZZ^{d+1}
\end{equation}

We will use the convention that $\{e_i\}$ is the standard basis of $\ZZ^{d+1}$.

Assuming the boundedness condition (\ref{Cone2}), condition (\ref{Cone1}) simply states that $0$ is in the semigroup $\Gamma$.

\begin{Theorem}\label{ConeTheorem1}(Section 3, \cite{Ok}, Proposition 2.1 \cite{LM}) Suppose that $\Gamma$ satisfies (\ref{Cone1}) - (\ref{Cone3}). Then
$$
\lim_{m\rightarrow\infty} \frac{\# \Gamma_m}{m^d}={\rm vol}(\Delta(\Gamma)).
$$
\end{Theorem}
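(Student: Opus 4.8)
The plan is to sandwich $\#\Gamma_m/m^d$ between two quantities both converging to $\mathrm{vol}(\Delta)$, where $\Delta=\Delta(\Gamma)$. First observe that (\ref{Cone2}) makes $\Delta$ a compact convex body: if $B$ is generated by $(v_1,1),\dots,(v_s,1)$ then $\Sigma\subseteq\mathrm{cone}(B)$, so $\Delta\subseteq\mathrm{conv}\{v_1,\dots,v_s\}$ is bounded and $\Sigma\cap(\RR^d\times\{0\})=\{0\}$; moreover $\Sigma\cap(\RR^d\times\{t\})=t\Delta$ for real $t>0$. Also (\ref{Cone3}) forces $\Sigma$ to be $(d+1)$-dimensional, since a semigroup contained in a hyperplane through the origin generates a group of rank $\le d$; hence $\Delta$ is $d$-dimensional and $\mathrm{vol}(\Delta)>0$. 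For the upper bound, viewing $\Gamma_m\subset\NN^d\times\{m\}$ we have $\Gamma_m\subseteq\ZZ^d\cap m\Delta$, and the elementary estimate $\#(\ZZ^d\cap m\Delta)=\mathrm{vol}(\Delta)m^d+o(m^d)$ for bounded convex bodies (and their translates) gives $\limsup_m\#\Gamma_m/m^d\le\mathrm{vol}(\Delta)$.

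For the lower bound it suffices to show, for each $\epsilon>0$, that $\liminf_m\#\Gamma_m/m^d\ge\mathrm{vol}(\Delta)-\epsilon$. The convex cone generated by $\Gamma$ is the union of the rational cones $\mathrm{cone}(F)$ over finite $F\subseteq\Gamma$, and since this family is directed the slices $\mathrm{cone}(F)\cap(\RR^d\times\{1\})$ exhaust $\Delta$ up to a set of measure zero (they share the relative interior of $\Delta$); so I would fix a finite $F\subseteq\Gamma$ with $\mathrm{vol}(\mathrm{cone}(F)\cap(\RR^d\times\{1\}))\ge\mathrm{vol}(\Delta)-\epsilon$, and enlarge $F$ so that in addition $0\in F$ and $F$ contains a generating set of $G(\Gamma)=\ZZ^{d+1}$ (each of finitely many group generators is a $\ZZ$-combination of finitely many elements of $\Gamma$). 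Write $\Gamma'$ for the subsemigroup generated by $F$, $\Sigma'=\mathrm{cone}(F)$, and $\Delta'=\Sigma'\cap(\RR^d\times\{1\})$; then $\Gamma'\subseteq\Gamma$, $G(\Gamma')=\ZZ^{d+1}$, $\Sigma'$ is full dimensional, and $\mathrm{vol}(\Delta')\ge\mathrm{vol}(\Delta)-\epsilon$.

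The key input is the following lemma of Khovanskii for the finitely generated semigroup $\Gamma'$: there is an element $a=(a',h)\in\Gamma'$ with $(a+\Sigma')\cap\ZZ^{d+1}\subseteq\Gamma'$. Granting it, for every integer $m>h$ and every $x\in\ZZ^d$ with $x\in a'+(m-h)\Delta'$, the point $(x,m)$ lies in $(a+\Sigma')\cap\ZZ^{d+1}\subseteq\Gamma'\subseteq\Gamma$ and has nonnegative coordinates, hence belongs to $\Gamma_m$. Therefore $\#\Gamma_m\ge\#\bigl(\ZZ^d\cap(a'+(m-h)\Delta')\bigr)=\mathrm{vol}(\Delta')(m-h)^d+o(m^d)$, so $\liminf_m\#\Gamma_m/m^d\ge\mathrm{vol}(\Delta')\ge\mathrm{vol}(\Delta)-\epsilon$, and letting $\epsilon\to0$ finishes the proof.

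The main obstacle is therefore Khovanskii's lemma, which I would prove as follows. By Gordan's lemma the saturation $S=\Sigma'\cap\ZZ^{d+1}$ is a finitely generated semigroup; fix generators $h_1,\dots,h_t$ of $S$ (with $0\in S$ as well). Since $\Sigma'$ is spanned by the integral vectors of $F$ and each $h_j$ is integral, $h_j$ is a nonnegative rational combination of elements of $F$, hence $Nh_j\in\Gamma'$ for a common $N\in\ZZ_+$; dividing the coefficients in any expression $x=\sum c_jh_j\in S$ by $N$ with remainder shows $S=\Gamma'+T$, where $T=\{\sum_j t_jh_j:0\le t_j<N\}$ is a finite subset of $S$. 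For each $\tau\in T\subseteq\ZZ^{d+1}=G(\Gamma')$, split a $\ZZ$-expression of $\tau$ in the generators $F$ into positive and negative parts to write $\tau=p_\tau-q_\tau$ with $p_\tau,q_\tau\in\Gamma'$; then $q_\tau+\tau=p_\tau\in\Gamma'$. Putting $a=\sum_{\tau\in T}q_\tau\in\Gamma'$ gives $a+\tau\in\Gamma'$ for every $\tau\in T$, hence $a+S=a+\Gamma'+T=\Gamma'+(a+T)\subseteq\Gamma'$; since $a$ is integral, $(a+\Sigma')\cap\ZZ^{d+1}=a+(\Sigma'\cap\ZZ^{d+1})=a+S\subseteq\Gamma'$, as required. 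The one genuinely delicate point of the whole argument is precisely this use of (\ref{Cone3}): without $G(\Gamma')=\ZZ^{d+1}$ the finitely many remainder classes $\tau\in T$ cannot be cancelled into $\Gamma'$, and indeed the conclusion of the theorem fails if the group condition is dropped.
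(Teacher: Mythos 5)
The paper does not prove Theorem~\ref{ConeTheorem1}; it is cited from Okounkov \cite{Ok} and Lazarsfeld--Mustata \cite{LM} (Proposition~2.1), so there is no internal proof to compare against. Your argument is correct and is essentially the standard one from \cite{LM}: the upper bound is elementary lattice-point counting in $m\Delta$, and the lower bound approximates $\Sigma$ from within by cones over finitely many points of $\Gamma$ (using (\ref{Cone3}) to arrange $G(\Gamma')=\ZZ^{d+1}$), then invokes Khovanskii's lemma to show that $\Gamma'$ eventually captures all lattice points in a translate of its cone. Your self-contained proof of Khovanskii's lemma via Gordan's lemma and the remainder set $T$ is accurate, and you correctly isolate where condition (\ref{Cone3}) is indispensable (to cancel the finitely many remainders $\tau\in T$ into $\Gamma'$); the only tiny unremarked point is that the translation vector $a'$ is integral, which is what lets the lattice count for $a'+(m-h)\Delta'$ reduce to that for $(m-h)\Delta'$, but this is immediate from $a\in\Gamma'\subset\NN^{d+1}$.
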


Recently, it has been shown that limits exist under much weaker conditions by Kaveh and Khovanskii in \cite{KK}.

\begin{Theorem}\label{ConeTheorem2}(Proposition 3.1 \cite{LM}) Suppose that $\Gamma$ satisfies (\ref{Cone1}) - (\ref{Cone3}). Fix $\epsilon>0$. Then there is an integer $p_0= p_0(\epsilon)$
such that if $p\ge p_0$, then the limit
$$
\lim_{k\rightarrow \infty} \frac{\#(k\Gamma_p)}{k^dp^d}\ge {\rm vol}(\Delta(\Gamma))-\epsilon
$$
exists, where
$$
k\Gamma_p=\{x_1+\cdots+x_k\mid x_1,\ldots,x_k\in \Gamma_p\}.
$$
\end{Theorem}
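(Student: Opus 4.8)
The plan is to deduce the statement from Theorem~\ref{ConeTheorem1} applied to a finitely generated semigroup manufactured from a single graded piece $\Gamma_p$, the point being to choose $p$ so large that this new semigroup still satisfies (\ref{Cone1})--(\ref{Cone3}) and has Okounkov body of almost the full volume. Call the last coordinate of an element its \emph{level}. For $m\ge 1$ view $\Gamma_m$ as a finite subset of $\NN^d$ and put $A_m=\frac1m\,\mathrm{conv}(\Gamma_m)$; since $\Gamma_m\subseteq \Sigma$ we have $A_m\subseteq\Delta:=\Delta(\Gamma)$. The first step is to show $\mathrm{vol}(A_m)\to\mathrm{vol}(\Delta)$ as $m\to\infty$. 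Because $\Gamma_m\subseteq mA_m\cap\ZZ^d$ and all the convex bodies $mA_m$ lie in the single fixed bounded set $m\Delta$, a Steiner-formula count gives $\#\Gamma_m\le m^d\mathrm{vol}(A_m)+O(m^{d-1})$ with the implied constant depending only on $\Delta$ and $d$; dividing by $m^d$ and letting $m\to\infty$, Theorem~\ref{ConeTheorem1} (applied to $\Gamma$ itself) yields $\mathrm{vol}(\Delta)\le\liminf_m\mathrm{vol}(A_m)$, while $A_m\subseteq\Delta$ gives the opposite bound on the $\limsup$. Here $\mathrm{vol}(\Delta)$ is finite by (\ref{Cone2}) and positive because (\ref{Cone3}) forces $\Sigma$ to be full-dimensional. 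Since it suffices to prove the assertion for small $\epsilon$, we may assume $\epsilon<\mathrm{vol}(\Delta)$, and then fix $p_1$ with $\mathrm{vol}(A_p)>\mathrm{vol}(\Delta)-\epsilon$ for all $p\ge p_1$; in particular $\Gamma_p\neq\emptyset$ and $\mathrm{conv}(\Gamma_p)$ is $d$-dimensional for such $p$.

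For $p\ge p_1$ let $S=S_p\subseteq\NN^{d+1}$ be the subsemigroup generated by $\{(a,1):(a,p)\in\Gamma_p\}$. It is finitely generated by finitely many vectors of level $1$, so (\ref{Cone1}) and (\ref{Cone2}) hold for $S$ (take $B=S$); its level-$k$ piece $S_k$, viewed in $\NN^d$, is exactly the sumset $k\Gamma_p$; and $\Delta(S)=\mathrm{conv}(\Gamma_p)=pA_p$. The crux is (\ref{Cone3}), i.e.\ that $L_p':=\langle a-a':a,a'\in\Gamma_p\rangle=\ZZ^d$, for then $G(S)=(L_p'\times\{0\})+\ZZ\cdot(a_0,1)=\ZZ^{d+1}$ for any $(a_0,p)\in\Gamma_p$. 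To see $L_p'=\ZZ^d$ for large $p$, first use $0\in\Gamma$ (this is (\ref{Cone1})): then $\gamma=\gamma-0$ for every $\gamma\in\Gamma$, so the group generated by all differences $\Gamma-\Gamma$ contains $\Gamma$, hence equals $G(\Gamma)=\ZZ^{d+1}$. Next, from $\Gamma_m+\Gamma_n\subseteq\Gamma_{m+n}$ one gets $L_m'+L_n'\subseteq L_{m+n}'$, so $L^\circ:=\bigcup_m L_m'$ is a subgroup of $\ZZ^d$; and any level-zero element of $\langle\Gamma-\Gamma\rangle$ can be rewritten (separating signs) as $P_1-P_2$ with $P_1,P_2\in\Gamma$ of equal level, hence it lies in some $L_m'\times\{0\}$, so $\langle\Gamma-\Gamma\rangle\cap(\ZZ^d\times\{0\})=L^\circ\times\{0\}$. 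Combining the last two facts, $L^\circ=\ZZ^d$. By the ascending chain condition $L^\circ=L_{m_0}'$ for some $m_0$, so $L_{jm_0}'=L^\circ$ for all $j\ge1$, and the inclusion $L_m'+L_n'\subseteq L_{m+n}'$ propagates this: for $p\ge 2m_0$, writing $p=km_0+r$ with $0\le r<m_0$ (so $k\ge2$) gives $L_p'\supseteq L_{(k-1)m_0}'+L_{m_0+r}'\supseteq L^\circ$, whence $L_p'=\ZZ^d$.

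Now set $p_0=\max(p_1,2m_0)$. For $p\ge p_0$ the semigroup $S=S_p$ satisfies (\ref{Cone1})--(\ref{Cone3}), so Theorem~\ref{ConeTheorem1} applies and gives
$$
\lim_{k\to\infty}\frac{\#(k\Gamma_p)}{k^d}=\lim_{k\to\infty}\frac{\#S_k}{k^d}={\rm vol}(\Delta(S))=p^d\,{\rm vol}(A_p),
$$
so the limit in the statement exists and equals ${\rm vol}(A_p)>{\rm vol}(\Delta(\Gamma))-\epsilon$, as required. I expect the only real difficulty to be the verification of (\ref{Cone3}) for $S_p$: this is the step that genuinely uses (\ref{Cone3}) for $\Gamma$ together with the presence of $0\in\Gamma$, and it is what forces a threshold $p_0$ and makes the statement genuinely asymptotic, mirroring the role of a sufficiently divisible "stabilizing index" in Fujita-type approximation. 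The volume-convergence step is comparatively soft but does require that the lattice-point error term be uniform in $m$, which is secured by keeping every $A_m$ inside the single bounded body $\Delta$.
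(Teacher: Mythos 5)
Your approach --- applying Theorem \ref{ConeTheorem1} to the semigroup $S_p$ generated by $\Gamma_p\times\{1\}$ and showing $\mathrm{vol}(A_p)\to\mathrm{vol}(\Delta)$ --- is the right one, and is essentially the argument of \cite{LM} (the paper cites Proposition~3.1 there rather than reproducing a proof). Your volume-comparison step is sound: every $mA_m$ lies inside the fixed compact set $m\Delta$ (compactness of $\Delta$ being a consequence of (\ref{Cone2})), so the lattice-point error term is uniformly $O(m^{d-1})$, and combined with Theorem \ref{ConeTheorem1} this forces $\mathrm{vol}(A_m)\to\mathrm{vol}(\Delta)$. The identifications $S_k=k\Gamma_p$ and $\Delta(S_p)=pA_p$ are also correct.

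There is, however, a gap in your verification of (\ref{Cone3}) for $S_p$. The inclusion $L_m'+L_n'\subseteq L_{m+n}'$ that you state and then propagate is \emph{not} true unconditionally: proving $L_m'\subseteq L_{m+n}'$ requires choosing some $b\in\Gamma_n$ to shift by, so one needs $\Gamma_n\ne\emptyset$ (and symmetrically for the other summand). Your propagation step invokes $L_p'\supseteq L_{(k-1)m_0}'+L_{m_0+r}'$ for $p=km_0+r$, but there is no reason $\Gamma_{m_0+r}$ should be nonempty. For example, with $d=1$ and $\Gamma\subset\NN^2$ the semigroup generated by $(0,5),(1,5),(0,7),(1,7)$, one has $m_0=5$, and taking $p=24$ gives $r=4$ and $\Gamma_{m_0+r}=\Gamma_9=\emptyset$, so the displayed chain of inclusions is unjustified even though the conclusion $L_{24}'=\ZZ$ happens to hold. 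Thus the threshold $\max(p_1,2m_0)$ is not established by your argument.

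The fix is easy. The set of levels $\{m:\Gamma_m\ne\emptyset\}$ is a numerical semigroup (a submonoid of $\NN$ whose generated group is $\ZZ$, by projecting (\ref{Cone3}) to the last coordinate), hence has a conductor $C$: all integers $\ge C$ are occupied. Since $\mathrm{vol}(A_p)>0$ forces $\Gamma_p\ne\emptyset$, automatically $p_1\ge C$. Now take $p_0\ge p_1+m_0$. For $p\ge p_0$ one has $p-m_0\ge p_1\ge C$, so $\Gamma_{p-m_0}\ne\emptyset$; picking any $b\in\Gamma_{p-m_0}$ and translating gives $\{a+b:a\in\Gamma_{m_0}\}\subseteq\Gamma_p$, whence $L_{m_0}'=\ZZ^d\subseteq L_p'$. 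That is exactly what is needed, and with this repair your proof is complete and correct.

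Two very minor points: your appeal to the ascending chain condition to produce $m_0$ is a little loose (the $L_m'$ are not nested); it is cleaner to pick generators $e_i\in L_{m_i}'$ of $\ZZ^d$ and set $m_0=\sum m_i$, noting each $\Gamma_{m_i}\ne\emptyset$ so the shift argument applies. And the positivity of $\mathrm{vol}(\Delta)$ deserves one more sentence: (\ref{Cone3}) forces $\Gamma$ to span $\RR^{d+1}$, so $\Sigma$ has nonempty interior, and since $\Sigma\subset\RR^d\times\RR_{\ge0}$ is a cone its interior meets $\RR^d\times\{1\}$ in a nonempty open set.
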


\section{An asymptotic theorem on lengths}

\begin{Definition} A graded family of ideals $\{I_i\}$ in a ring $R$ is a family of ideals indexed by the natural numbers such that $I_0=R$ and $I_iI_j\subset I_{i+j}$
for all $i,j$.  If $R$ is a local ring and $I_i$ is $m_R$-primary for $i>0$, then we will say that $\{I_i\}$ is a graded family of $m_R$-primary ideals.
\end{Definition}

In this section we prove the following theorem.
\begin{Theorem}\label{Theorem1} Suppose that $R$ is an analytically irreducible local domain of dimension $d>0$ and  $\{I_n\}$ is a graded family of  ideals in $R$ such that 
\begin{equation}\label{eq8}
\mbox{there exists $c\in \ZZ_+$ such that $m_R^c\subset I_1$.}
\end{equation} 
Suppose that there exists a regular local ring $S$ such that $S$ is essentially of finite type  and birational over $R$ ($R$ and $S$ have the same  quotient field)   and the residue field map 
$R/m_R\rightarrow S/m_S$ is an isomorphism. Then
$$
\lim_{i\rightarrow\infty}\frac{\ell_R(R/I_i)}{i^d}
$$
exists.
\end{Theorem}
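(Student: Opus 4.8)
The plan is to reduce the length computation to a count of lattice points in slices of a cone, and then invoke Theorem~\ref{ConeTheorem1}. Concretely, since $S$ is regular, local, essentially of finite type over $R$ and birational, with $S/m_S = R/m_R$, I would choose a regular system of parameters $y_1,\ldots,y_d$ of $S$ and use it to build a rank-one, rational-rank-$d$ valuation $\nu$ dominating $S$ (hence dominating $R$), by assigning to the $y_j$ values that are rationally independent positive reals, say $\nu(y_j)=\lambda_j$, and taking the associated monomial valuation on $S$. Because $S/m_S=R/m_R$, the residue field of $\nu$ equals $R/m_R$, which is exactly what is needed to translate dimensions of $R$-modules into counts of lattice points. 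For each $n$ and each $t\in\RR_{\ge 0}$ set $\widehat{I}_n(t)=\{f\in R : \nu(f)\ge t\}$-type valuation ideals, and observe that $\ell_R(R/I_n)$ can be read off from the set of values $\{\nu(f) : f\in R\setminus I_n\}$ together with the fact that each "jump'' in the valuation filtration contributes a one-dimensional $R/m_R$-quotient.

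Next I would assemble the semigroup
$$
\Gamma = \{(\text{exponent vector of }\nu\text{-value of }f,\ n) : f\in I_n,\ n\ge 0\}\subset \NN^{d+1},
$$
or rather its complement-flavored analogue adapted to counting $R/I_n$; more precisely, I would use that $\ell_R(R/I_n)=\#(\Delta_n)$ for an appropriate finite subset $\Delta_n\subset\NN^d$ coming from the ordered values $\nu$ takes on $R$ modulo $I_n$, and that these $\Delta_n$ fit together into $\Gamma_n$ for a single semigroup $\Gamma\subset\NN^{d+1}$. Then $\ell_R(R/I_n)=\#\Gamma_n$ and $\lim \ell_R(R/I_n)/n^d=\lim \#\Gamma_n/n^d$, which equals $\mathrm{vol}(\Delta(\Gamma))$ provided $\Gamma$ satisfies conditions (\ref{Cone1})--(\ref{Cone3}). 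Condition (\ref{Cone1}) is automatic ($I_0=R$ gives only the zero value in the degree-zero slice after quotienting, so $\Gamma_0=\{0\}$), and condition (\ref{Cone3}) can be arranged by a standard trick: enlarge/regrade so that the group generated is all of $\ZZ^{d+1}$, or observe $G(\Gamma)=\ZZ^{d+1}$ directly from the fact that $\nu$ restricted to $S$ already realizes a finite-index, indeed full, sublattice of values via monomials in the $y_j$, and the extra $e_{d+1}$ direction comes from the grading.

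The main obstacle is condition (\ref{Cone2}), the boundedness of the cone: I must produce finitely many vectors $(v_i,1)$ spanning a semigroup $B$ with $\Gamma\subset B$, equivalently a linear bound of the form $\nu(f)\le C\,\mathrm{ord}_{m_R}(f)$ on $R$ (or on the relevant generators as $n$ grows). This is exactly where analytic irreducibility of $R$ enters: by Rees' version of Izumi's theorem (\cite{R2}), or by Huebl's linear Zariski subspace theorem (\cite{Hu}) in the excellent case, there is a constant $C$ such that $\nu(f)\le C\,\mathrm{ord}_{m_R}(f)$ for all $f\in R$; combined with the hypothesis (\ref{eq8}) that $m_R^c\subset I_1$ (so that $I_n\supset m_R^{cn}$ and the relevant slices $\Gamma_n$ live inside the region $\nu\le Cn$), this pins $\Gamma$ inside a rational polyhedral cone over a bounded base, yielding (\ref{Cone2}). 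The remaining steps are routine: verify carefully that passing from the valuation filtration to lengths is exact (each graded piece is a $1$-dimensional $R/m_R$-vector space because the residue fields agree), check that $\Gamma$ is genuinely a semigroup (using $I_iI_j\subset I_{i+j}$ and multiplicativity of $\nu$-values), and then apply Theorem~\ref{ConeTheorem1} to conclude that $\lim_{i\to\infty}\ell_R(R/I_i)/i^d=\mathrm{vol}(\Delta(\Gamma))$ exists.
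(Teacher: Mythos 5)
Your setup is right — the valuation $\nu$, the fact that $S/m_S = R/m_R$ forces $V_\nu/m_\nu = R/m_R$ (so each ``jump'' in the valuation filtration is exactly one copy of $k=R/m_R$), and the use of Rees' Izumi theorem/Huebl's linear Zariski subspace theorem to establish the boundedness condition (\ref{Cone2}) are all exactly what the paper does. But there is a real gap at the heart of the argument: you claim that $\ell_R(R/I_n)=\#\Gamma_n$ for a \emph{single} semigroup $\Gamma\subset\NN^{d+1}$, where $\Gamma_n$ is ``the set of values $\nu$ takes on $R$ modulo $I_n$.'' That complement set is \emph{not} a semigroup. For instance, already in $R=S=k[[y]]$ with $\nu(y)=1$ and $I_n=(y^{a_n})$, the exponents of a $k$-basis of $R/I_n$ are $\{0,1,\ldots,a_n-1\}$, and $1+(a_n-1)=a_n$ falls outside the set, so additivity fails. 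More generally, $\phi(R)\setminus\phi(I_n)$ is an order-ideal (a ``staircase''), not a sub-semigroup, precisely because $f,g\notin I_n$ does not prevent $fg\in I_n$. Without semigroup structure you cannot invoke Theorem~\ref{ConeTheorem1}. (A symptom of this: for the complement set you would get $\Gamma_0=\emptyset$, not $\{0\}$, violating (\ref{Cone1}).)

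The fix, which is what the paper actually does, is to express the length as a \emph{difference} of two genuine sub-semigroup counts. Set $\beta=\alpha c$ with $\alpha$ from the Izumi-type bound, and define
\[
\Gamma=\{(n_1,\ldots,n_d,i):(n_1,\ldots,n_d)\in\phi(I_i),\ n_1+\cdots+n_d\le\beta i\},
\]
\[
\Gamma'=\{(n_1,\ldots,n_d,i):(n_1,\ldots,n_d)\in\phi(R),\ n_1+\cdots+n_d\le\beta i\}.
\]
Both are semigroups (using $I_iI_j\subset I_{i+j}$ for $\Gamma$, and multiplicativity of $\nu$ for both), both satisfy (\ref{Cone1})--(\ref{Cone3}), and the relation $K_{\beta i}\cap R\subset m_R^{ci}\subset I_i$ combined with $\lambda_j\ge 1$ gives $\ell_R(R/I_i)=\#\Gamma'_i-\#\Gamma_i$. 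Applying Theorem~\ref{ConeTheorem1} separately to $\Gamma'$ and $\Gamma$ yields $\lim_i\#\Gamma'_i/i^d$ and $\lim_i\#\Gamma_i/i^d$, and their difference is the desired limit. Your verification of (\ref{Cone3}) should also be tightened to this two-semigroup picture: the paper checks $G(\Gamma)=\ZZ^{d+1}$ explicitly by writing $y_j=f_j/g_j$ with $f_j,g_j\in R$, picking $0\ne h\in I_1$, and noting $(\phi(hf_j),1)-(\phi(hg_j),1)=(e_j,0)\in G(\Gamma)$; your appeal to the monomials in the $y_j$ does not directly produce elements of $\Gamma$ since $\Gamma$ is built from $\phi(I_i)$, not from $\phi(S)$.
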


We remark that the assumption $m_R^c\subset I_1$ implies that either $I_n$ is $m_R$-primary for all positive $n$, or there exists
$n_0>1$ such that $I_{n_0}=R$. In this case, $m_R^{cn_0}\subset I_n$ for all $n\ge n_0$, so $\ell_R(R/I_i)$ is actually bounded.

Let assumptions be as in Theorem \ref{Theorem1}. 
Let $y_1,\ldots,y_d$ be a regular system of parameters in $S$. Let $\lambda_1,\ldots,\lambda_d$ be rationally independent real numbers, such that 
\begin{equation}\label{eq9}
\lambda_i\ge 1\mbox{ for all $i$}.
\end{equation}
 We define a valuation $\nu$ on
$Q(R)$ which dominates $S$ by prescribing 
$$
\nu(y_1^{a_1}\cdots y_d^{a_d})=a_1\lambda_1+\cdots+a_d\lambda_d
$$
for $a_1,\ldots,a_d\in \ZZ_+$, and $\nu(\gamma)=0$  if $\gamma\in S$ has nonzero residue in $S/m_S$.

Let $C$ be a coefficient set of $S$. Since $S$ is a regular local ring, for $r\in \ZZ_+$ and $f\in S$, there is a unique expression 
$$
f=\sum s_{i_1,\ldots,i_d}y_1^{i_1}\cdots y_d^{i_d}+g_r
$$
with $g_r\in m_S^r$, $s_{i_1,\ldots,i_d}\in S$ and $i_1+\cdots+i_d<r$ for all $i_1,\ldots,i_d$ appearing in the sum. Take $r$ so large that 
$r> i_1\lambda_1+\cdots+i_d\lambda_d$ for some term with $s_{i_1,\ldots,i_d}\ne 0$. Then define
\begin{equation}\label{eq61}
\nu(f)=\min\{i_1\lambda_1+\cdots+i_d\lambda_d\mid s_{i_1,\ldots,i_d}\ne 0\}.
\end{equation}
This definition is well defined, and we calculate that
$\nu(f+g)\ge \min\{\nu(f),\nu(g)\}$ and $\nu(fg)=\nu(f)+\nu(g)$ (by the uniqueness of the expansion (\ref{eq61})) for all $0\ne f,g\in S$. Thus $\nu$ is a valuation.
 Let $V_{\nu}$ be the valuation ring of $\nu$ (in $Q(R)$). The value group of $V_{\nu}$ is the (nondiscrete) ordered subgroup 
$\ZZ\lambda_1+\cdots+\ZZ\lambda_d$ of $\RR$. Since there is unique monomial giving the minimum in (\ref{eq61}), we have that the residue field of $V_{\nu}$ is
$S/m_S=R/m_R$.

For $\lambda\in \RR$, define  ideals $K_{\lambda}$ and $K_{\lambda^+}$ in $V_{\nu}$ by
$$
K_{\lambda}=\{f\in Q(R)\mid \nu(f)\ge\lambda\}
$$
and
$$
K_{\lambda^+}=\{f\in Q(R)\mid \nu(f)>\lambda\}.
$$

We follow the usual convention that $\nu(0)=\infty$ is larger than any element of $\RR$.

\begin{Lemma}\label{Lemma7} There exists $\alpha\in \ZZ_+$ such that $K_{\alpha n}\cap R\subset m_R^n$ for all $n\in \NN$.
\end{Lemma}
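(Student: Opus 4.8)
The plan is to show that the $\nu$-filtration on $R$ is comparable to the $m_R$-adic filtration, using the comparison between $R$ and the regular local ring $S$, together with the fact that $\nu$ "sees" the $m_S$-adic order of an element of $S$ up to a bounded factor.

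First I would reduce the statement on $R$ to a statement on $S$. Since $S$ is essentially of finite type and birational over $R$, we have $R \subset S \subset Q(R)$, and the contraction $m_S \cap R$ is an $m_R$-primary ideal of $R$ (as $R$ is local and $S$ dominates... actually $S$ need only be birational, so one must be a little careful: $S$ is a localization of a finitely generated $R$-algebra inside $Q(R)$). The key input here is that since $R$ is analytically irreducible, a theorem of Rees' type (Izumi–Rees, as cited in the introduction, \cite{R2}) — or more elementarily the fact that $R\to S$ is a finite-type birational extension so that $S$ is a localization of a blowup — gives a constant $a_1$ with $m_S^{a_1} \cap R \subset m_R$, hence $m_S^{a_1 n} \cap R \subset m_R^n$ for all $n$. (Alternatively one invokes that $S$ dominates $R$ and the associated graded / Artin–Rees comparison of the two filtrations on the common fraction field is linear, which is exactly the linear-Zariski-subspace / Izumi content flagged in the introduction as requiring analytic irreducibility.) Granting this, it suffices to produce $\beta \in \ZZ_+$ with $K_{\beta n} \cap S \subset m_S^n$ for all $n$, and then take $\alpha = a_1 \beta$.

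Next I would establish $K_{\beta n}\cap S\subset m_S^n$ directly from the definition of $\nu$. By construction, for $0\ne f\in S$ write its expansion $f=\sum s_{i_1,\dots,i_d}y_1^{i_1}\cdots y_d^{i_d}+g_r$ as in (\ref{eq61}); then $\nu(f)=\min\{i_1\lambda_1+\cdots+i_d\lambda_d : s_{i_1,\dots,i_d}\ne 0\}$. Set $\lambda_{\max}=\max_i\lambda_i$. If the $\nu$-minimizing monomial $y_1^{i_1}\cdots y_d^{i_d}$ has total degree $i_1+\cdots+i_d = e$, then $e\,\lambda_{\max} \ge i_1\lambda_1+\cdots+i_d\lambda_d = \nu(f)$, so $e \ge \nu(f)/\lambda_{\max}$; since the expansion has $s_{i_1,\dots,i_d}\in S$ and the remaining terms lie in $m_S^r$ with $r$ chosen $> \nu(f)$, every term of $f$ — in particular $f$ itself — lies in $m_S^{\lceil \nu(f)/\lambda_{\max}\rceil}$ once one observes that all surviving monomials have total degree $\ge e$ (those of strictly smaller total degree would give a strictly smaller value of $\nu$, contradicting minimality, using $\lambda_i\ge 1$ from (\ref{eq9})). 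Wait — more precisely, among the terms with $i_1+\cdots+i_d < e$ none can occur with nonzero coefficient, because $\lambda_i \ge 1$ forces their value to be $\le e-1 < $ the attained minimum is false; rather one argues: if some term had total degree $< \nu(f)/\lambda_{\max}$ it would still not necessarily have small value, so instead I bound the other way. Let me restate cleanly: I will show $f\in m_S^{n}$ whenever $\nu(f)\ge \lambda_{\max}\cdot n$. Indeed every monomial $y^I$ appearing in the truncated expansion with $s_I\ne 0$ satisfies $\nu(y^I)=\sum i_j\lambda_j \ge \nu(f)\ge \lambda_{\max} n \ge (\sum i_j)$ only when... hmm, I actually want $\sum i_j \ge n$. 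From $\sum i_j\lambda_j \ge \lambda_{\max} n$ and $\lambda_j\le \lambda_{\max}$ we get $\lambda_{\max}\sum i_j \ge \sum i_j\lambda_j\ge \lambda_{\max}n$, hence $\sum i_j \ge n$. So every surviving monomial lies in $m_S^n$, and the tail $g_r\in m_S^r\subset m_S^n$ since $r>\nu(f)\ge n$. Therefore $f\in m_S^n$, i.e. $K_{\lambda_{\max} n}\cap S\subset m_S^n$; take $\beta=\lceil\lambda_{\max}\rceil$.

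Finally, combining the two steps with $\alpha=a_1\lceil\lambda_{\max}\rceil$ gives $K_{\alpha n}\cap R \subset K_{\alpha n}\cap S \subset m_S^{a_1 n}\cap R\subset m_R^n$, as desired. I expect the genuinely substantive point — the one place real hypotheses are used — to be the first reduction: controlling $m_S^{a_1 n}\cap R \subset m_R^n$ by a \emph{linear} function of $n$. This is where analytic irreducibility of $R$ enters (it fails in general for reducible rings), and it is exactly the Izumi–Rees / Huebl linear-Zariski-subspace phenomenon referred to in the introduction; the estimate relating $\nu$ to $m_S$-order is by contrast completely elementary from the monomial description of $\nu$ and the normalization $\lambda_i\ge 1$.
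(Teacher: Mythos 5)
Your proof is correct and follows essentially the same two-step strategy as the paper: the elementary estimate $K_{\lceil\lambda_{\max}\rceil n}\cap S\subset m_S^n$ from the monomial description of $\nu$ and the normalization $\lambda_i\ge 1$ (the paper's (\ref{eq10})), combined with the linear comparison $m_S^{an}\cap R\subset m_R^n$ (the paper's (\ref{eq11})) established via Rees' version of Izumi's theorem \cite{R2} (or Huebl \cite{Hu} in the excellent case), yielding the same constant $\alpha=a\lceil\lambda_{\max}\rceil$. One small slip worth noting: you write that Izumi--Rees gives $m_S^{a_1}\cap R\subset m_R$ and ``hence'' $m_S^{a_1 n}\cap R\subset m_R^n$, but the $n=1$ containment does not formally imply the linear statement for all $n$; the full linear comparison is what Izumi--Rees/Huebl gives directly (and is the genuine content, as you correctly flag), so this is a wording issue rather than a gap in the argument.
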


\begin{proof} Let $\rho=\lceil \max\{\lambda_1,\ldots,\lambda_d\}\rceil\in \ZZ_+$. Suppose that $\lambda\in \RR_+$. $K_{\lambda}$ is generated by the monomials
$y_1^{i_1}\cdots y_d^{i_d}$ such that $i_1\lambda_1+\cdots+ i_d\lambda_d\ge \lambda$, which implies that
$$
\frac{\lambda}{\rho}\le i_1+\cdots+i_d,
$$
so that
\begin{equation}\label{eq10}
K_{\lambda}\cap S\subset m_S^{\lceil \frac{\lambda}{\rho}\rceil}.
\end{equation}
We now establish the following equation:
there exists $a\in \ZZ_+$ such that
\begin{equation}\label{eq11}
m_S^{a\ell}\cap R\subset m_R^{\ell}
\end{equation}
for all $\ell\in\NN$.

In the case when $R$ is excellent, this is immediate from the linear Zariski subspace theorem, Theorem 1 of \cite{Hu}.

We now give a proof of (\ref{eq11}) which was provided by the referee, which is valid without assuming that $R$ is excellent. 
Let $\omega$ be the $m_S$-adic valuation. Let $\nu_i$ be the Rees valuations of $m_R$. The $\nu_i$ extend uniquely to the Rees valuations of $m_{\hat R}$. By Rees' version of Izumi's theorem, \cite{R2}, the topologies defined on $R$ by $\omega$ and the $\nu_i$ are linearly equivalent. Let $\overline \nu_{m_R}$ be the reduced order of $m_R$. By the Rees valuation theorem (recalled in \cite{R2}),
$$
\overline \nu_{m_R}(x)=\min_i\left\{\frac{\nu_i(x)}{\nu_i(m_R)}\right\}
$$
for all $x\in R$, so the topology defined by $\omega$ on $R$ is linearly equivalent to the topology defined  by $\overline\nu_{m_R}$.
The $\overline \nu_{m_R}$ topology is linearly equivalent to the $m_R$-topology by \cite{R1}, since $R$ is analytically unramified.
Thus (\ref{eq11}) is established.

Let $\alpha=\rho a$, where $\rho$ is the constant of (\ref{eq10}), and $a$ is the constant of (\ref{eq11}).
$$
K_{\alpha n}\cap S=K_{\rho a n}\cap S\subset m_S^{an}
$$
by (\ref{eq10}), and thus
$$
K_{\alpha n}\cap R\subset m_S^{an}\cap R\subset m_R^n
$$
by (\ref{eq11}).

\end{proof}

\begin{Remark} The conclusions of Lemma \ref{Lemma7} fail if $R$ is not analytically irreducible, as can be seen from the example
$$
R=\left(k[x,y]/y^2-x^2(x+1)\right)_{(x,y)}\rightarrow S=k[s]_{(s)},
$$
where $s=\frac{y}{x}-1$.
\end{Remark}

For $0\ne f\in R$, define 
$$
\phi(f)=(n_1,\ldots,n_d)\in \NN^d
$$
if $\nu(f)=n_1\lambda_1+\cdots+n_d\lambda_d$.

\begin{Lemma} 
Suppose that $I\subset R$ is an ideal and $\lambda\in\RR_+$. Then there are isomorphisms of $R/m_R$-modules 
$$
K_{\lambda}\cap I/K_{\lambda^+}\cap I\cong \left\{ 
\begin{array}{ll}
k&\mbox{ if there exists $f\in I$ such that $\nu(f)=\lambda$}\\
0&\mbox{ otherwise}.
\end{array}\right.
$$
\end{Lemma}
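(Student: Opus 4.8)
The plan is to split into the two cases of the statement. In the case where no $f\in I$ satisfies $\nu(f)=\lambda$, every nonzero element of $K_\lambda\cap I$ has value $>\lambda$, so $K_\lambda\cap I=K_{\lambda^+}\cap I$ and the quotient is $0$ (this is vacuous if $I=0$). So the substance is the other case: fix $f_0\in I$ with $\nu(f_0)=\lambda$, and build an explicit isomorphism onto $k=R/m_R$.

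Before that I would record the elementary facts that $R\subseteq S\subseteq V_\nu$ (since $\nu$ dominates $S$ and $S$ is birational over $R$), so that $\nu(r)\ge 0$ for $r\in R$ and $\nu(r)>0$ for $r\in m_R$, and that the residue map $V_\nu\to V_\nu/m_\nu$ restricted to $R$ is the canonical surjection $R\to R/m_R$ followed by the identification $R/m_R=S/m_S=V_\nu/m_\nu$ already noted. Consequently $K_{\lambda^+}\cap I\subseteq K_\lambda\cap I$ are $R$-submodules of $R$, and $m_R$ annihilates the quotient (because $\nu(rf)=\nu(r)+\nu(f)>\lambda$ when $r\in m_R$ and $f\in K_\lambda$), so the quotient is naturally a module over $k$.

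Then I would define $\psi\colon K_\lambda\cap I\to k$ by letting $\psi(g)$ be the residue in $V_\nu/m_\nu=k$ of $g/f_0$; this is legitimate because $\nu(g/f_0)=\nu(g)-\lambda\ge 0$, so $g/f_0\in V_\nu$ (with $0/f_0=0$). Since $V_\nu\to k$ is a ring homomorphism, $\psi$ is additive and $\psi(rg)$ equals the class of $r$ in $k$ times $\psi(g)$ for $r\in R$, hence $\psi$ is $k$-linear. It is surjective because $\psi(f_0)$ is the residue of $1$, namely $1$; and its kernel is $\{g\in K_\lambda\cap I:\nu(g/f_0)>0\}=\{g\in K_\lambda\cap I:\nu(g)>\lambda\}=K_{\lambda^+}\cap I$ (using $\nu(0)=\infty>\lambda$). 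Thus $\psi$ induces the isomorphism $K_\lambda\cap I/K_{\lambda^+}\cap I\cong k$.

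There is no real obstacle here; the only points needing a little care are the bookkeeping with the domination $R\subseteq S\subseteq V_\nu$ — needed so that $g/f_0$ lands in $V_\nu$ and so that the residue-field identification is the one induced by inclusion — and checking $K_{\lambda^+}\cap I\subseteq K_\lambda\cap I$ with $m_R$ acting as zero on the quotient, which is what makes the quotient a well-defined $k$-module. Everything else is immediate from $\nu$ being a valuation whose residue field is $R/m_R$.
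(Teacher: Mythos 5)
Your proof is correct and uses essentially the same mechanism as the paper: given $f_0\in I$ with $\nu(f_0)=\lambda$, compare any other such $g$ to $f_0$ by taking the residue of $g/f_0$ in $V_\nu/m_\nu\cong R/m_R$, lift it to $R$, and subtract to land in $K_{\lambda^+}$. The paper phrases this as showing that any two elements of value $\lambda$ are $k$-scalar multiples of each other modulo $K_{\lambda^+}\cap I$, whereas you package the same computation as an explicit $k$-linear surjection $\psi\colon K_\lambda\cap I\to k$ with kernel $K_{\lambda^+}\cap I$; the difference is cosmetic.
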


\begin{proof} Suppose that $f,g\in K_{\lambda}\cap I$ are such that $\nu(f)=\nu(g)=\lambda$. Then $\nu(\frac{f}{g})=0$. Let $\overline\alpha$ be the class
of $\frac{f}{g}$ in $V_{\nu}/m_{\nu}\cong R/m_R$. Let $\alpha\in R$ be a lift of $\overline\alpha$ to $R$. Then
$\nu(f-\alpha g)>\lambda$, and the class of $f$ in $K_{\lambda}\cap I/K_{\lambda^+}\cap I$ is equal to $\overline\alpha$ times the class of $g$ in
$K_{\lambda}\cap I/K_{\lambda^+}\cap I$.
\end{proof}

Suppose that $I\subset R$ is an ideal and $K_{\beta}\cap R \subset I$ for some $\beta\in \RR_+$. Then
\begin{equation}\label{eq12}
\begin{array}{lll}
\ell_R(R/I)&=&\ell_R(R/K_{\beta}\cap R)-\ell_R(I/K_{\beta}\cap R)\\
&=& \dim_k\left(\bigoplus_{\lambda<\beta}K_{\lambda}\cap R/K_{\lambda^+}\cap R\right)
-\dim_k\left(\bigoplus_{\lambda<\beta}K_{\lambda}\cap I/K_{\lambda^+}\cap R\right)\\
&=& \#\{(n_1,\ldots,n_d)\in \phi(R)\mid n_1\lambda_1+\cdots+n_d\lambda_d<\beta\}\\
&& - \#\{(n_1,\ldots,n_d)\in \phi(I)\mid n_1\lambda_1+\cdots+n_d\lambda_d<\beta\}.
\end{array}
\end{equation}

 Let $\beta=\alpha c\in \ZZ_+$, where $c$ is the constant of (\ref{eq8}), and $\alpha$ is the constant of Lemma \ref{Lemma7}, so that for all $i\in \ZZ_+$,
\begin{equation}\label{eq13}
K_{\beta i}\cap R= K_{\alpha c i}\cap R\subset m_R^{ic}\subset I_i.
\end{equation}
We have from (\ref{eq12}) that
\begin{equation}\label{eq14}
\begin{array}{lll}
\ell_R(R/I_i)&=& \#\{(n_1,\ldots,n_d)\in\phi(R)\mid n_1\lambda_1+\cdots+n_d\lambda_d<\beta i\}\\
&&-\#\{(n_1,\ldots,n_d)\in\phi(I_i)\mid n_1\lambda_1+\cdots+n_d\lambda_d<\beta i\}.
\end{array}
\end{equation}

Now $(n_1,\ldots,n_d)\in\phi(R)$ and $n_1+\cdots+n_d\ge \beta i$ implies $n_1\lambda_1+\cdots+n_d\lambda_d\ge\beta i$ by (\ref{eq9}), so that $(n_1,\ldots,n_d)\in \phi(I_i)$ by (\ref{eq13}). Thus 
\begin{equation}\label{eq15}
\begin{array}{lll}
\ell_R(R/I_i)&=& \#\{(n_1,\ldots,n_d)\in\phi(R)\mid n_1+\cdots+n_d\le\beta i\}\\
&&-\#\{(n_1,\ldots,n_d)\in\phi(I_i)\mid n_1+\cdots+n_d\le\beta i\}.
\end{array}
\end{equation}
Let $\Gamma\subset \NN^{d+1}$ be the semigroup
$$
\Gamma=\{(n_1,\ldots,n_d,i)\mid (n_1,\ldots,n_d)\in \phi(I_i)\mbox{ and }n_1+\cdots+n_d\le \beta i\}.
$$
$I_0=R$ (and $\nu(1)=0$)  implies (\ref{Cone1}) holds. The semigroup
$$
B=\{(n_1,\ldots,n_d,i)\mid (n_1,\ldots,n_d)\in \NN^{d}\mbox{ and }n_1+\cdots+n_d\le\beta i\}
$$
is generated by $B\cap (\NN^d\times\{1\})$ and contains $\Gamma$, so (\ref{Cone2}) holds.

Write $y_i=\frac{f_i}{g_i}$ with $f_i,g_i\in R$ for $1\le i\le d$. Let $0\ne h\in I_1$. Then $hf_i, hg_i\in I_1$. There exists $c'\in \ZZ_+$ such that $c'\ge c$ and $hf_i, hg_i\not\in m_R^{c'}$ for $1\le i\le d$. We may replace $c$ with $c'$ in (\ref{eq8}). Then
$\phi(hf_i), \phi(hg_i)\in \Gamma_1=\Gamma\cap(\NN^d\times\{1\})$ for $1\le i\le d$, since $hf_i$ and $hg_i$ all have values $n_1\lambda_1+\cdots+n_d\lambda_d<\beta i$, so that $n_1+\ldots+n_d<\beta i$. We have that  $\phi(hf_i)-\phi(hg_i)=e_i$ for $1\le i\le d$. Thus
$$
(e_i,0)=(\phi(hf_i),1)-(\phi(hg_i),1)\in G(\Gamma)
$$
for $1\le i\le d$. Since $(\phi(hf_i),1)\in G(\Gamma)$, we have that $(0,1)\in G(\Gamma)$, so $G(\Gamma)=\ZZ^{d+1}$ and (\ref{Cone3}) holds.
By Theorem \ref{ConeTheorem1}, 
\begin{equation}\label{eq16}
\lim_{i\rightarrow\infty} \frac{\# \Gamma_i}{i^d}={\rm vol}(\Delta(\Gamma)).
\end{equation}

Let $\Gamma'\subset \NN^{d+1}$ be the semigroup
$$
\Gamma'=\{(n_1,\ldots,n_d,i)\mid (n_1,\ldots,n_d)\in \phi(R)\mbox{ and }n_1+\cdots+n_d\le \beta i\}.
$$
Our calculation for $\Gamma$ shows that (\ref{Cone1}) - (\ref{Cone3}) holds for $\Gamma'$. 
By Theorem \ref{ConeTheorem1}, 
\begin{equation}\label{eq17}
\lim_{i\rightarrow\infty} \frac{\# \Gamma'_i}{i^d}={\rm vol}(\Delta(\Gamma')).
\end{equation}
We obtain the conclusions of Theorem \ref{Theorem1} from equations (\ref{eq15}), (\ref{eq16}) and (\ref{eq17}).

The following is an immediate consequence of Theorem \ref{Theorem1}, taking $S=R$.

\begin{Theorem}\label{Theorem0} Suppose that $R$ is a  regular local ring of dimension $d>0$ and $\{I_n\}$ is a graded family of $m_R$-primary ideals in $R$. Then the limit
$$
\lim_{n\rightarrow\infty} \frac{\ell_R(R/I_n)}{n^d}
$$
exists.
\end{Theorem}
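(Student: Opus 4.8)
The plan is to obtain Theorem \ref{Theorem0} as the special case $S=R$ of Theorem \ref{Theorem1}, so the entire proof reduces to checking that the hypotheses of Theorem \ref{Theorem1} are met. First I would record that a regular local ring $R$ is a domain and is analytically irreducible: the completion $\widehat R$ of a regular local ring is again regular (for instance because $\mathrm{gr}_{m_R}(R)\cong \mathrm{gr}_{m_{\widehat R}}(\widehat R)$ is a polynomial ring over $R/m_R$ while $\widehat R$ is Noetherian local of dimension $d$), hence $\widehat R$ is a domain, and therefore $R$ is analytically irreducible. The dimension hypothesis $d>0$ is part of the statement.

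Next I would verify condition (\ref{eq8}). Since $\{I_n\}$ is a graded family of $m_R$-primary ideals, the ideal $I_1$ is $m_R$-primary, so $\sqrt{I_1}=m_R$; as $R$ is Noetherian, $m_R$ is finitely generated, and hence $m_R^c\subset I_1$ for some $c\in\ZZ_+$, which is exactly (\ref{eq8}). Finally I would take $S=R$: then $S$ is trivially essentially of finite type over $R$ and birational over $R$ (same quotient field), $S$ is regular by hypothesis, and the residue field map $R/m_R\to S/m_S$ is the identity map, hence an isomorphism.

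With all the hypotheses of Theorem \ref{Theorem1} now in force, that theorem gives the existence of $\lim_{n\to\infty}\ell_R(R/I_n)/n^d$, which is precisely the assertion of Theorem \ref{Theorem0}. I do not expect any genuine obstacle here; the only step that calls for a word of justification is the analytic irreducibility of a regular local ring, and everything else is immediate from the definitions of a graded family of $m_R$-primary ideals and of ``essentially of finite type and birational.''
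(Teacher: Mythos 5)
Your proposal is correct and coincides with the paper's own argument: the paper derives Theorem \ref{Theorem0} precisely by taking $S=R$ in Theorem \ref{Theorem1}. The verifications you supply (analytic irreducibility of a regular local ring, $m_R^c\subset I_1$ from $I_1$ being $m_R$-primary, and the trivial birational/residue-field conditions for $S=R$) are exactly the routine checks implicit in the paper's one-line remark.
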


\section{A theorem on asymptotic lengths  in more general rings}

\begin{Lemma}\label{Lemma5} Suppose that $R$ is a $d$-dimensional reduced local ring and $\{I_n\}$ is a graded family of $m_R$-primary ideals in $R$,  Let $\{p_1,\ldots, p_s\}$ be the minimal primes of $R$, $R_i=R/p_i$, and let $S$ be the ring
$S=\bigoplus_{i=1}^sR_i$. Then there exists $\alpha\in \ZZ_+$ such that for all $n\in \ZZ_+$,
$$
|\sum_{i=1}^s\ell_{R_i}(R_i/I_nR_i)-\ell_R(R/I_n)|\le \alpha n^{d-1}.
$$
\end{Lemma}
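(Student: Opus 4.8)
The plan is to compare $R$ with $S = \bigoplus R_i$ via the natural surjection $\pi\colon R \to S$ and to control both the kernel and the cokernel of the induced maps on the quotients $R/I_n$ and $S/I_nS$ uniformly in $n$. Let $K = \ker\pi$, so that $K$ is the intersection of the minimal primes and $K^s \subset \bigcap p_i^{} = (0)$ up to nilpotence, but since $R$ is reduced $K$ is simply the nilradical-free obstruction: $K = \bigcap_i p_i$ and $\dim(R/\mathrm{ann}(K)) \le d-1$ because $K$ vanishes after localizing at any minimal prime, hence $\dim K \le d-1$ as an $R$-module. First I would set up the exact sequence
\begin{equation}\label{plan-ses}
0 \to K/(K\cap I_n) \to R/I_n \to S/\pi(I_n) \to 0,
\end{equation}
which gives $\ell_R(R/I_n) = \ell_R\bigl(S/\pi(I_n)\bigr) + \ell_R\bigl(K/(K\cap I_n)\bigr)$. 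The second summand is bounded by $\ell_R(K/I_nK) \le \ell_R(K/m_R^{cn}K)$, which grows like a polynomial of degree $\dim K \le d-1$ in $n$ (using $m_R^c \subset I_1$, hence $m_R^{cn} \subset I_n$, which holds here since the $I_n$ are $m_R$-primary so such a $c$ exists, or one argues directly from the Artin--Rees / Hilbert function bound); so this contributes an error $O(n^{d-1})$.

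Next I would relate $\ell_R(S/\pi(I_n))$ to $\sum_i \ell_{R_i}(R_i/I_nR_i)$. Since $S = \bigoplus R_i$, we have $\ell_R(S/\pi(I_n)) = \sum_i \ell_R(R_i/\pi_i(I_n))$ where $\pi_i\colon R \to R_i$ is the $i$-th projection, and $\pi_i(I_n) = (I_n + p_i)/p_i = I_nR_i$ under the identification $R_i = R/p_i$. Because the residue field extensions $R/m_R \to R_i/m_{R_i}$ need not be trivial — wait, they are, $m_{R_i} = m_R/p_i$ has the same residue field — so $\ell_R(R_i/I_nR_i) = \ell_{R_i}(R_i/I_nR_i)$ exactly, with no fudge factor. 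Hence $\ell_R(S/\pi(I_n)) = \sum_i \ell_{R_i}(R_i/I_nR_i)$ on the nose, and combining with the previous paragraph gives
\begin{equation}\label{plan-est}
\Bigl|\sum_{i=1}^s \ell_{R_i}(R_i/I_nR_i) - \ell_R(R/I_n)\Bigr| = \ell_R\bigl(K/(K\cap I_n)\bigr) \le \alpha n^{d-1}
\end{equation}
for a suitable $\alpha \in \ZZ_+$ and all $n \ge 1$.

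The main obstacle I anticipate is pinning down the uniform polynomial bound $\ell_R(K/I_nK) = O(n^{d-1})$, i.e., making precise that a finitely generated $R$-module $K$ of dimension $\le d-1$ satisfies $\ell_R(K/m_R^{tn}K) \le \alpha n^{d-1}$ for all $n$ with a single constant $\alpha$. This follows from the fact that the Hilbert--Samuel function $t \mapsto \ell_R(K/m_R^t K)$ is eventually a polynomial of degree $\dim K \le d-1$ and is dominated for all $t$ by a fixed polynomial of that degree; one then substitutes $t = cn$ (where $m_R^c \subset I_1$, hence $m_R^{cn}\subset I_n$) and absorbs constants into $\alpha$. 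One subtlety worth a line: $K$ may fail to be $m_R$-primary as a module (it is killed by none of the obvious ideals), but $K/m_R^tK$ still has finite length since $K$ is finitely generated and $m_R^t \to 0$; the dimension statement $\dim K \le d-1$ is exactly the assertion that $K_{p_i} = 0$ for every minimal prime $p_i$, which holds because localizing $K = \bigcap_j p_j$ at $p_i$ gives $\bigcap_j (p_j)_{p_i}$, and for $j \ne i$, $(p_j)_{p_i} = R_{p_i}$ is the whole ring while... hmm, more carefully: $K$ localized at $p_i$ embeds in $R_{p_i}$ and is killed by $(p_i)_{p_i}$-nilpotents, but $R_{p_i}$ is a field since $R$ is reduced, and $K_{p_i} \subsetneq R_{p_i}$ forces $K_{p_i} = 0$. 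Thus $\mathrm{Supp}\,K$ misses all minimal primes, so $\dim K \le d-1$, and the bound follows.
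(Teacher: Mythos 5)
There is a fundamental error at the very start: the natural map $\pi\colon R \to S=\bigoplus_{i=1}^s R/p_i$ is \emph{not} surjective. For $s\ge 2$ an element such as $(1,0,\ldots,0)\in S$ is never in the image; in fact $\pi$ is \emph{injective}, because its kernel is $\bigcap_i p_i = \mathrm{nil}(R) = (0)$ (here you use that $R$ is reduced). Your proposed short exact sequence
$$
0 \to K/(K\cap I_n) \to R/I_n \to S/\pi(I_n) \to 0
$$
therefore fails in two ways at once: the left-hand term is identically zero (so it cannot account for any discrepancy between the two lengths), and the right-hand map has a nontrivial cokernel that you have not accounted for, which is where the actual $O(n^{d-1})$ error lives. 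A secondary but related error is the identity $\ell_R(S/\pi(I_n))=\sum_i\ell_{R_i}(R_i/I_nR_i)$: since $\pi$ is not surjective, $\pi(I_n)$ is only the $R$-submodule of ``diagonal'' elements $\{(\bar f,\ldots,\bar f): f\in I_n\}$, which is strictly smaller than $\bigoplus_i I_nR_i = I_nS$; so the quotient you are computing is not the one whose length decomposes as a sum.

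The paper instead regards $R\hookrightarrow S$ as an inclusion inside the total ring of fractions $T=\bigoplus_i Q(R_i)$, picks a nonzerodivisor $x\in R$ with $xS\subset R$ (possible because $S$ is a finite $R$-submodule of $T$), and studies the four-term exact sequence
$$
0\to (R\cap I_nS)/I_n \to R/I_n \to S/I_nS \to N_n \to 0.
$$
The element $x$ is the key tool you are missing: it shows $x(R\cap I_nS)\subset I_n$, so the kernel embeds into $(I_n:x)/I_n$ and is bounded via the Hilbert--Samuel function of $R/xR$ (dimension $\le d-1$), while the cokernel $N_n=S/(R+I_nS)$ is a quotient of $S/(xS+I_nS)$ and is bounded via the Hilbert--Samuel function of the semi-local ring $S/xS$ (also dimension $\le d-1$). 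Your anticipated ``main obstacle'' --- bounding Hilbert--Samuel functions of $(d-1)$-dimensional modules uniformly --- is indeed the final step, but it sits downstream of the correct exact sequence and of the auxiliary element $x$, neither of which appears in your plan.
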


\begin{proof}  
There exists $c\in \ZZ_+$ such that $m_R^c\subset I_1$. Since $S$ is a finitely generated $R$-submodule of the total ring of fractions $T=\bigoplus_{i=1}^s Q(R_i)$ of $R$, there exists a non zero divisor
$x\in R$ such that $xS\subset R$.

The natural inclusion $R\rightarrow S$ induces exact sequences of $R$-modules
\begin{equation}\label{eq18}
0\rightarrow R\cap I_nS/I_n\rightarrow R/I_n\rightarrow S/I_nS\rightarrow N_n\rightarrow 0.
\end{equation}

 We also have exact sequences of $R$-modules
\begin{equation}\label{eq19}
0\rightarrow A_n\rightarrow R/I_n\stackrel{x}{\rightarrow} R/I_n\rightarrow M_n\rightarrow 0.
\end{equation} 

 We have that $x(R\cap I_nS)\subset I_n$ and $A_n=I_n:x/I_n$, so that 
\begin{equation}\label{eq20}
\ell_R(R\cap I_nS/I_n)\le \ell_R(A_n).
\end{equation}

Now $M_n\cong (R/x)/I_n(R/x)$, so
$$
\ell_R(M_n)\le \ell_R((R/x)/m_R^{nc}(R/x))\le \beta (nc)^{d-1}
$$
for some $\beta$, computed from the Hilbert-Samuel polynomial of $R/x$ and the finitely many values of the Hilbert-Samuel function of $R/x$ which do not
agree with this polynomial. Thus
\begin{equation}\label{eq21}
\ell_R(A_n)=\ell(M_n)\le \beta c^{d-1}n^{d-1}
\end{equation}
by (\ref{eq19}).

Since $xS\subset R$, we have that
$$
N_n\cong (S/R+I_nS)=S/(R+I_nS+xS).
$$
Thus
\begin{equation}\label{eq22}
\ell_R(N_n)\le \ell_R((S/xS)/m_R^{nc}(S/xS))\le\gamma(nc)^{d-1}
\end{equation}
for some $\gamma$, computed from the Hilbert-Samuel polynomial of the semilocal ring $S/x$, with respect to the  ideal of definition $m_R(S/xS)$.
Thus
$$
|\ell_R(R/I_n)-\ell_R(S/I_nS)|\le \max\{\beta,\gamma\}c^{d-1}n^{d-1}.
$$
The lemma now follows, since
$$
\ell_R(S/I_nS)=\sum\ell_{R_i}(R_i/I_nR_i).
$$
\end{proof}

\begin{Theorem}\label{Theorem6} Suppose that $R$ is an analytically unramified  local ring with algebraically closed residue field.
Let $d>0$ be the dimension of $R$. Suppose that  $\{I_n\}$ is a graded family of $m_R$-primary ideals in $R$.
 Then
$$
\lim_{i\rightarrow\infty}\frac{\ell_R(R/I_i)}{i^d}
$$
exists.
\end{Theorem}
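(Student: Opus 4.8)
The plan is to reduce the general analytically unramified case to Theorem \ref{Theorem1}, which handles an analytically irreducible domain admitting a birational regular local overring with trivial residue field extension. First, since $\ell_R(R/I_i)$ only depends on the $I_i$ and the finitely many bottom values of the Hilbert function differ from a polynomial of degree $d-1$, passing to the completion $\widehat R$ costs nothing: $\ell_R(R/I_i)=\ell_{\widehat R}(\widehat R/I_i\widehat R)$ and $\{I_i\widehat R\}$ is still a graded family of $m_{\widehat R}$-primary ideals, so I may assume $R$ is complete (hence excellent) and reduced, with algebraically closed residue field $k$. Next, applying Lemma \ref{Lemma5} to $R$ with minimal primes $p_1,\dots,p_s$ and $R_j=R/p_j$, we get
$$
\left|\sum_{j=1}^s\ell_{R_j}(R_j/I_iR_j)-\ell_R(R/I_i)\right|\le \alpha i^{d-1},
$$
so $\frac{\ell_R(R/I_i)}{i^d}$ and $\sum_j\frac{\ell_{R_j}(R_j/I_iR_j)}{i^d}$ have the same limiting behavior. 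Those $R_j$ with $\dim R_j<d$ contribute terms that are $O(i^{d-1})$ after dividing by $i^d$ (using $m_{R_j}^{ic}\subset I_iR_j$ and the Hilbert--Samuel polynomial of $R_j$), hence vanish in the limit. So it suffices to prove the limit exists for each $R_j$ with $\dim R_j=d$; i.e. I may assume $R$ is a complete local domain of dimension $d$ with algebraically closed residue field $k$, and $\{I_i\}$ a graded family of $m_R$-primary ideals.

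Now $R$ is in particular analytically irreducible, so to invoke Theorem \ref{Theorem1} I need a regular local ring $S$, essentially of finite type and birational over $R$, with $R/m_R\to S/m_S$ an isomorphism. This is where the hypotheses ``complete domain'' and ``$k$ algebraically closed'' do the work: by the theory of resolution of singularities of complete local domains — or more elementarily, by normalizing and blowing up — one produces a birational projective morphism $X\to\operatorname{Spec}R$ with $X$ regular, and since $R$ is complete with infinite (algebraically closed) residue field one can arrange, after a suitable generic hyperplane / general position argument, a closed point $\beta\in X$ lying over $m_R$ at which $X$ is regular; then $S=\mathcal O_{X,\beta}$ is a regular local ring, essentially of finite type over $R$, with $Q(S)=Q(R)$, and $S/m_S$ is a finite extension of $k=R/m_R$, hence equals $k$ because $k$ is algebraically closed. (Alternatively, if $\dim R\le 2$ or $R$ is already regular this is immediate; the general case uses that $R$ essentially of finite type over a field after Cohen's structure theorem, or Abhyankar/Hironaka resolution, which is available since we are in the complete equicharacteristic — or more generally excellent — domain setting; the $k$-algebraically-closed hypothesis is precisely what guarantees the residue field is not enlarged.) With such an $S$ in hand and noting that condition (\ref{eq8}), $m_R^c\subset I_1$, holds because $I_1$ is $m_R$-primary, Theorem \ref{Theorem1} applies and gives that $\lim_{i\to\infty}\ell_R(R/I_i)/i^d$ exists.

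The main obstacle is the construction of the birational regular local overring $S$ with unchanged residue field: one must exhibit a regular closed point on a resolution lying over the closed point of $\operatorname{Spec}R$, and check that its local ring has residue field exactly $R/m_R$ rather than a proper finite extension — this is exactly the point at which ``algebraically closed residue field'' is used, and it is the reason this theorem is stated with that hypothesis rather than merely ``perfect''. Everything else — reduction to the completion, the Lemma \ref{Lemma5} comparison, discarding the lower-dimensional components, and the verification of (\ref{eq8}) — is routine length bookkeeping already packaged in the lemmas above.
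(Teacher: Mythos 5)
Your overall strategy is exactly the paper's: pass to $\widehat R$, use Lemma~\ref{Lemma5} to reduce to the domains $\widehat R/q_j$, discard components of dimension $<d$, and then construct a birational regular local overring $S$ with unchanged residue field so that Theorem~\ref{Theorem1} applies. The use of algebraically closedness of $R/m_R$ to collapse the residue field extension $S/m_S$ over $R/m_R$ is also correct.

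The gap is in the construction of $S$. You offer two justifications, and neither works. The appeal to resolution of singularities (Hironaka/Abhyankar) requires characteristic $0$ (or dimension $\le 3$), but Theorem~\ref{Theorem6} makes no equicharacteristic hypothesis, and a complete local domain is in any case not essentially of finite type over a field, so that route is unavailable in the stated generality. The ``more elementary'' alternative — ``normalizing and blowing up produces $X$ regular'' — is simply false: the normalized blow-up of $m_R$ is normal, not regular. The resulting ``generic hyperplane / general position'' step is then either vacuous (if $X$ were regular) or ungrounded (since you have no regularity to work with).

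What actually saves the argument, and is what the paper does, is much weaker and needs no resolution at all. Take $\pi\colon X\to\operatorname{Spec} R$ to be the normalization of the blow-up of $m_R$; it is of finite type over $R$ because $R$ (complete) is excellent. Since $m_R\mathcal O_X$ is locally principal, the fiber $\pi^{-1}(m_R)$ is a Weil divisor, hence has codimension $1$ in $X$. But $X$ is normal, so it is regular in codimension $1$ (Serre's criterion $R_1$); therefore $\pi^{-1}(m_R)$ is not contained in the singular locus and contains a closed point $x$ at which $\mathcal O_{X,x}$ is regular. That local ring is your $S$: essentially of finite type and birational over $R$, and $S/m_S = R/m_R$ since $S/m_S$ is finite over the algebraically closed field $R/m_R$. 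This gives the hypotheses of Theorem~\ref{Theorem1} without any resolution or general-position input. You should replace your construction of $S$ with this $R_1$ argument; the rest of your reduction is fine.
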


\begin{proof}  Let $\hat R$ be the $m_R$-adic completion of $R$, which is reduced and excellent. Since the $I_n$ are $m_R$-primary, we have that 
$R/I_n\cong \hat R/I_n\hat R$ and $\ell_R(R/I_n)=\ell_{\hat R}(\hat R/I_n\hat R)$ for all $n$.  Let
$\{q_1,\ldots,q_s\}$ be the minimal primes of $\hat R$. By Lemma \ref{Lemma5}, we reduce to proving the theorem for the families of ideals $\{I_n\hat R/q_i\}$ in
$\hat R/q_i$, for $1\le i\le s$. We may thus assume that $R$ is a complete domain.
Let $\pi:X\rightarrow \mbox{spec}(R)$ be the normalization of the blow up of $m_R$. $X$ is of finite type over $R$ since $R$ is excellent.
Since $\pi^{-1}(m_R)$ has codimension 1 in $X$ and $X$ is normal, there exists a closed point $x\in X$ such that the local ring $\mathcal O_{X,x}$ is a regular local ring. Let $S$ be this local ring. $S/m_S=R/m_R$ since $S/m_S$ is finite over $R/m_R$ which is an algebraically closed field.
\end{proof}

\begin{Lemma}\label{Lemma1} Suppose that $R$ is a Noetherian local domain that contains a field $k$. Suppose that $k'$ is a finite separable field extension of $k$ such that $k\subset R/m_R\subset k'$. Let $S=R\otimes_kk'$. Then $S$ is a reduced Noetherian semi local ring. Let $p_1,\ldots, p_r$ be the maximal ideals of $S$. Then $m_RS= p_1\cap \cdots \cap p_r$.
\end{Lemma}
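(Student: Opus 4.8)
The plan is to study the finite $R$-algebra $S=R\otimes_k k'$ by reducing modulo $m_RS$, recognizing $S/m_RS$ as a product of fields, and pulling the maximal ideals back along $S\to S/m_RS$.

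First I would record the elementary structure. Since $k'/k$ is finite, $S=R\otimes_k k'$ is a finitely generated free $R$-module, hence a Noetherian $R$-algebra which is module-finite, and in particular integral, over $R$. As $R$ is local with maximal ideal $m_R$, lying-over forces every maximal ideal $\mathfrak p$ of $S$ to contract to $m_R$, so $m_RS\subset\mathfrak p$. Consequently the maximal ideals of $S$ are exactly the preimages of the maximal ideals of $\overline S:=S/m_RS=(R/m_R)\otimes_k k'$; since $\overline S$ is a finite-dimensional algebra over the field $R/m_R$ it is Artinian and has only finitely many maximal ideals, which we label $\overline p_1,\dots,\overline p_r$, with preimages $p_1,\dots,p_r$ in $S$. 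In particular $S$ is semilocal with maximal ideals precisely $p_1,\dots,p_r$.

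Next I would prove reducedness. By the primitive element theorem write $k'=k(\theta)$ with minimal polynomial $f\in k[x]$; separability of $\theta$ means $\gcd(f,f')=1$ in $k[x]$, i.e. there is a Bézout relation $af+bf'=1$ with $a,b\in k[x]$, and this relation persists over any extension field of $k$. Hence $f$ remains squarefree over $Q(R)$ and over $R/m_R$, so $Q(R)\otimes_k k'\cong Q(R)[x]/(f)$ and $\overline S\cong (R/m_R)[x]/(f)$ are each finite products of fields. Tensoring the inclusion $R\hookrightarrow Q(R)$ with the flat (free) $k$-module $k'$ gives an inclusion $S=R\otimes_k k'\hookrightarrow Q(R)\otimes_k k'$, and a subring of a reduced ring is reduced, so $S$ is reduced. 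Likewise $\overline S$ is reduced.

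Finally, $\overline S$ is Artinian and reduced, so its nilradical vanishes; but in an Artinian ring the nilradical coincides with the Jacobson radical $\overline p_1\cap\cdots\cap\overline p_r$. Pulling this back to $S$ gives $p_1\cap\cdots\cap p_r=m_RS$, which is the assertion. The only step that is not pure bookkeeping with integral extensions of a local ring is the stability of the condition $\gcd(f,f')=1$ under extension of the base field — that is, the place where the separability hypothesis is essential; without it $S$ or $\overline S$ could acquire nilpotents and $\bigcap_i p_i$ could strictly contain $m_RS$. (Note that the hypothesis $R/m_R\subset k'$ is not needed for this particular statement; it is used later.)
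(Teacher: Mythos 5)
Your proof is correct and follows essentially the same outline as the paper's: establish that $S$ is module-finite over $R$ (hence Noetherian and semilocal), show $S$ is reduced by embedding it in $Q(R)\otimes_k k'$ via flatness of $k'$ over $k$, show $S/m_RS\cong(R/m_R)\otimes_k k'$ is reduced, and conclude that the nilradical of this Artinian ring is trivial, giving $m_RS=p_1\cap\cdots\cap p_r$. The one place you diverge is the reducedness of the two tensor products: the paper simply cites Theorem 39 on p.\ 195 of Zariski--Samuel Vol.\ I (separable extensions remain reduced after base change to any field), whereas you give a self-contained argument via the primitive element theorem and the persistence of the B\'ezout relation $af+bf'=1$ under base extension. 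Your route is more elementary and makes the role of separability transparent, at the cost of a few extra lines; the paper's is shorter because it leans on a standard reference. Your closing remark that the hypothesis $R/m_R\subset k'$ is not used in this lemma (only later, e.g.\ in the computation of the residue fields $S/p_i$ when $k'/k$ is Galois) is also correct.
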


\begin{proof} Let $K$ be the quotient field of $R$. Then $K\otimes_kk'$ is reduced (by Theorem 39, page 195 \cite{ZS1}). Since $k'$ is flat over $k$, we have an inclusion $R\otimes_kk'\subset K\otimes_kk'$, so $S=R\otimes_kk'$ is reduced. $S/m_RS\cong (R/m_R)\otimes_kk'$ is also reduced by Theorem 39, \cite{ZS1}. Thus
$m_RS=p_1\cap\cdots \cap p_r$.
\end{proof}

\begin{Remark}\label{Remark3} In the case that $R$ is a regular local ring, we have that $S=R\otimes_kk'$ is a regular ring.
\end{Remark}

\begin{proof} Since $R$ is a regular local ring, $m_R$ is generated by $d=\dim R$ elements. For $1\le i\le r$, we thus have that $p_iS_{p_i}=m_RS_{p_i}$ is generated by $d=\dim R=\dim S_{p_i}$ elements. Thus $S_{p_i}$ is a regular local ring.
\end{proof}

\begin{Remark}\label{Remark4} If $k'$ is Galois over $k$, then $S/p_i\cong k'$ for $1\le i\le r$.
\end{Remark}

\begin{proof} Let $\tilde k=R/m_R$. By our assumption, $\tilde k$ is a finite separable extension of $k$. Thus $\tilde k=k[\alpha]$ for some $\alpha\in k'$.
Let $f(x)\in k[x]$ be the minimal polynomial of $\alpha$. $k'$ is a normal extension of $k$ containing $\alpha$, so $f(x)$ splits into linear factors in $k'[x]$.
Thus
$$
\bigoplus_{i=1}^r R/p_i\cong S/m_RS\cong \tilde k\otimes_kk'\cong k'[x]/(f(x))\cong (k')^r.
$$
\end{proof}

\begin{Remark}\label{Remark5} If $R$ is complete in the $m_R$-adic topology, then $R\otimes_kk'$ is complete in the $m_RR\otimes_kk'$-adic topology
(Theorem 16, page 277 \cite{ZS2}). If $p_1,\ldots,p_r$ are the maximal ideals of $R\otimes_kk'$, then $R\otimes_kk'\cong \bigoplus_{i=1}^r(R\otimes_kk')_{p_i}$
(Theorem 8.15 \cite{Ma2}). Thus each $(R\otimes_kk')_{p_i}$ is a complete local ring.
\end{Remark}

\begin{Lemma}\label{Lemma2} Let assumptions and notation be as in Lemma \ref{Lemma1}, and suppose that $I$ is an $m_R$-primary ideal in $R$.
 Then
$$
[k':k]\ell_R(R/I)=\sum_{i=1}^r[S/p_i:R/m_R]\ell_{S_{p_i}}((S/IS)_{p_i}).
$$
\end{Lemma}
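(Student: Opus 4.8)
The plan is to compute $\ell_R(S/IS)$ in two different ways and match the two expressions against the two sides of the asserted identity.

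\emph{The left-hand side.} Since $k'$ is a $k$-vector space of dimension $[k':k]$, the ring $S=R\otimes_kk'$ is a free $R$-module of rank $[k':k]$; moreover $IS=I\otimes_kk'$ (flatness of $k'$ over $k$), so
$$
S/IS\cong (R/I)\otimes_kk'\cong (R/I)^{[k':k]}
$$
as $R$-modules. Because $I$ is $m_R$-primary it is proper and $\ell_R(R/I)<\infty$, hence by additivity of length over direct sums
$$
\ell_R(S/IS)=[k':k]\,\ell_R(R/I)<\infty,
$$
which is the left-hand side of the formula.

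\emph{Decomposing $S/IS$ into local factors.} Pick $N$ with $m_R^N\subset I$; then $(m_RS)^N\subset IS$, so the radical of $IS$ contains $m_RS$. Since $S$ is finite (hence integral) over the local ring $R$, every maximal ideal of $S$ lies over $m_R$, so the maximal ideals of $S$ are exactly $p_1,\dots,p_r$, and each of them contains $IS$ because $IS\subset m_RS\subset p_i$. Therefore $S/IS$ is an Artinian ring whose maximal ideals are the images of $p_1,\dots,p_r$, and the structure theorem for Artinian rings gives
$$
S/IS\cong\prod_{i=1}^r(S/IS)_{p_i},\qquad (S/IS)_{p_i}\cong S_{p_i}/IS_{p_i},
$$
whence $\ell_R(S/IS)=\sum_{i=1}^r\ell_R\left((S/IS)_{p_i}\right)$, each summand being finite.

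\emph{Changing rings on each factor.} Let $M$ be any $S_{p_i}$-module of finite length. A composition series of $M$ over $S_{p_i}$ has every quotient isomorphic to the residue field $S_{p_i}/p_iS_{p_i}\cong S/p_i$. From $m_RS\subset p_i$ we get an inclusion of fields $R/m_R\hookrightarrow S/p_i$, and $S/p_i$ is finite over $R/m_R$ because $S$ is finite over $R$; thus $\ell_R(S/p_i)=\dim_{R/m_R}(S/p_i)=[S/p_i:R/m_R]$. Applying $\ell_R(-)$ to the composition series and using its additivity yields
$$
\ell_R(M)=[S/p_i:R/m_R]\,\ell_{S_{p_i}}(M).
$$
Taking $M=(S/IS)_{p_i}$ and combining the three displayed identities gives
$$
[k':k]\,\ell_R(R/I)=\ell_R(S/IS)=\sum_{i=1}^r[S/p_i:R/m_R]\,\ell_{S_{p_i}}\left((S/IS)_{p_i}\right),
$$
as required. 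The argument is mostly bookkeeping; the only step needing genuine care is the last change-of-rings length identity — together with the preliminary verification that $S/IS$ really is Artinian, which is what legitimizes the product decomposition and all the finite-length claims.
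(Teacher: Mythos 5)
Your proof is correct, and it is essentially the same argument as the paper's: compute a single quantity two ways, using the Artinian product decomposition $S/IS\cong\prod_i(S/IS)_{p_i}$, and then translate lengths across the residue field extensions. The only difference is cosmetic: the paper tallies $k$-dimensions (so both sides pick up an extra factor $[R/m_R:k]$, which cancels at the end), while you tally $R$-lengths directly, avoiding that factor and making the change-of-rings step $\ell_R(M)=[S/p_i:R/m_R]\,\ell_{S_{p_i}}(M)$ explicit rather than implicit.
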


\begin{proof}
$$
\dim_kR/I=[R/m_R:k]\ell_R(R/I)
$$
and 
$$
\dim_kS/IS=\dim_k (R/I)\otimes_kk'=[k':k]\dim_k(R/I).
$$
$S/IS$ is an Artin local ring, so that $S/IS\cong \bigoplus_{i=1}^r(S/IS)_{p_i}$. Thus
$$
\dim_k(S/IS)=\sum_{i=1}^r [S/p_i:k]\ell_{S_{p_i}}((S/IS)_{p_i}).
$$
\end{proof}

We will need the following definition. A commutative ring $A$ containing a field $k$ is said to be geometrically irreducible over $k$ if 
$A\otimes_kk'$ has a unique minimal prime for all finite extensions $k'$ of $k$.

\begin{Theorem}\label{Theorem2}  Suppose that $R$ is an analytically unramified  equicharacteristic local ring with perfect residue field.
 Let $d>0$ be the dimension of $R$.  Suppose that
 $\{I_n\}$ is a graded family of $m_R$-primary ideals in $R$. Then
$$
\lim_{i\rightarrow\infty}\frac{\ell_R(R/I_i)}{i^d}
$$
exists.
\end{Theorem}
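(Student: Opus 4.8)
The plan is to reduce, in three base-change stages, to a situation covered directly by Theorem~\ref{Theorem1}.

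\medskip
\noindent\emph{Stage 1 (reduction to a complete local domain).} As in the proof of Theorem~\ref{Theorem6}, I would first pass to the $m_R$-adic completion $\hat R$: since each $I_n$ is $m_R$-primary, $\ell_R(R/I_n)=\ell_{\hat R}(\hat R/I_n\hat R)$, and $\hat R$ is reduced (as $R$ is analytically unramified), excellent, equicharacteristic, with the same perfect residue field. Lemma~\ref{Lemma5}, applied to $\hat R$ and its minimal primes, then reduces the problem to the families $\{I_n\hat R/q_i\}$ in the complete local domains $\hat R/q_i$; the $q_i$ with $\dim\hat R/q_i<d$ contribute a length $O(n^{d-1})$, hence a limit $0$. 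So I may assume $R$ is a $d$-dimensional complete equicharacteristic local domain with perfect residue field, and I fix (Cohen) a coefficient field $k=R/m_R$.

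\medskip
\noindent\emph{Stage 2 (reduction to the geometrically irreducible case).} For a complete local domain the algebraic closure $k_0$ of $k$ in $Q(R)$ is finite over $k$ — by Noether normalization $R$ is module-finite over some $k[[t_1,\dots,t_d]]$, in whose fraction field $k$ is algebraically closed, so $k_0$ lies in the finite extension $Q(R)$ of $k((t_1,\dots,t_d))$ — hence finite separable ($k$ perfect) and itself perfect, so $Q(R)$ is geometrically integral over $k_0$. I would choose a finite Galois extension $k_1/k$ with $k_0\subset k_1$ and set $S=R\otimes_k k_1$. By Lemma~\ref{Lemma1} and Remarks~\ref{Remark4} and~\ref{Remark5}, $S$ is a reduced complete local ring with residue field $k_1$ and $m_S=m_RS$, and Lemma~\ref{Lemma2} (with $r=1$) gives $\ell_R(R/I_n)=\ell_S(S/I_nS)$. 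Applying Lemma~\ref{Lemma5} to $S$ as in Stage~1 reduces the problem to a quotient $R'=S/q$, $q$ a minimal prime of $S$ of dimension $d$; since $k_0\otimes_k k_1\cong k_1^{[k_0:k]}$ we have $Q(R)\otimes_k k_1\cong(Q(R)\otimes_{k_0}k_1)^{[k_0:k]}$, so $Q(R')$ is one of these factors and is geometrically integral over $k_1$. Thus $R'$ is a $d$-dimensional complete equicharacteristic local domain with residue field $k_1$ that is geometrically irreducible over $k_1$; replacing $(R,k,\{I_n\})$ by $(R',k_1,\{I_nR'\})$, I may assume in addition that $R$ is geometrically irreducible over $k$.

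\medskip
\noindent\emph{Stage 3 (the regular birational ring and Theorem~\ref{Theorem1}).} Following the proof of Theorem~\ref{Theorem6}, let $\pi\colon X\to\operatorname{spec}R$ be the normalization of the blow-up of $m_R$; it is of finite type over the excellent ring $R$ and normal, and since $\pi^{-1}(m_R)$ has codimension one there is a regular closed point $x\in\pi^{-1}(m_R)$, whose residue field $k''$ is finite over $k$, hence finite separable. I would pick a finite Galois extension $k_2/k$ with $k''\subset k_2$ and put $T=R\otimes_k k_2$. Because $k$ is perfect and $R$ is geometrically irreducible over $k$, $Q(R)\otimes_k k_2$ is a field, so $T$ is a $d$-dimensional complete local \emph{domain} with residue field $k_2$ and $m_T=m_RT$; the family $\{I_nT\}$ is a graded family of $m_T$-primary ideals with $m_T^c\subset I_1T$, and $\ell_R(R/I_n)=\ell_T(T/I_nT)$ by Lemma~\ref{Lemma2}. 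Now $X_T:=X\times_{\operatorname{spec}R}\operatorname{spec}T=X\times_{\operatorname{spec}k}\operatorname{spec}k_2$ is \'etale over $X$ (base change of the \'etale morphism $\operatorname{spec}k_2\to\operatorname{spec}k$), hence normal; it is integral with function field $Q(T)$, and the fibre of $X_T\to X$ over $x$ is $\operatorname{spec}(k''\otimes_k k_2)\cong\operatorname{spec}(k_2^{[k'':k]})$, so there is a point $y\in X_T$ over $x$ with residue field $k_2$. Then $\mathcal O_{X_T,y}$ is a regular local ring (\'etale over the regular $\mathcal O_{X,x}$), essentially of finite type and birational over $T$, with $T/m_T=k_2\to\mathcal O_{X_T,y}/\mathfrak m=k_2$ an isomorphism. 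Applying Theorem~\ref{Theorem1} to $T$, the family $\{I_nT\}$ and the regular local ring $\mathcal O_{X_T,y}$ then shows that $\lim_n\ell_T(T/I_nT)/n^d=\lim_n\ell_R(R/I_n)/n^d$ exists.

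\medskip
\noindent\emph{Expected main obstacle.} The delicate part is coordinating the two base changes: one needs the residue field to contain the residue field $k''$ of a regular point on the blow-up of $m_R$, but enlarging the residue field of a complete local domain generally destroys irreducibility, so the geometric-irreducibility reduction of Stage~2 must precede Stage~3. Perfectness of the residue field is what makes all the intervening algebraic extensions separable (so that the base changes of $X$ are \'etale and the residue fields land exactly on those of the base) and prevents the appearance of nilpotents; the technical inputs to be checked carefully are the finiteness of $k_0/k$ for a complete local domain and the descent of regularity and normality along \'etale morphisms.
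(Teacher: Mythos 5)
Your proof is correct and follows the same overall three-stage reduction as the paper (complete, quotient by minimal primes, pass to a complete domain that is geometrically irreducible over its coefficient field, then produce a regular birational $S$ after a further Galois base change and invoke Theorem~\ref{Theorem1}). The one genuinely different ingredient is your Stage 2: where the paper invokes EGA~IV, Corollary~4.5.11 to produce a finite Galois $L/k$ such that the quotients of $R\otimes_kL$ by its minimal primes are geometrically irreducible, you instead argue directly. You show the algebraic closure $k_0$ of $k$ in $Q(R)$ is finite over $k$ (via Noether normalization $k[[t_1,\ldots,t_d]]\hookrightarrow R$ and the observation that $k$ is algebraically closed in $Q(k[[t_1,\ldots,t_d]])$ because that ring is normal and local with residue field $k$), note that $k$ perfect makes $k_0$ perfect and algebraically closed in $Q(R)$, hence $Q(R)$ geometrically integral over $k_0$, and then explicitly decompose $Q(R)\otimes_kk_1\cong(Q(R)\otimes_{k_0}k_1)^{[k_0:k]}$ for $k_1\supset k_0$ Galois over $k$ to identify the minimal primes and see their quotients are geometrically irreducible over $k_1$. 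This is more elementary and self-contained than citing the EGA result, at the cost of being special to the complete-local-domain setting (which is all that is needed here). Your Stage 3 replaces the paper's explicit manipulation with generators $f_0,\ldots,f_t$ by the cleaner observation that $X_T=X\times_kk_2\to X$ is \'etale, hence preserves normality and regularity and produces a point $y$ over the regular point $x$ with residue field $k_2$; this is a reformulation rather than a different argument. Incidentally, the paper establishes finiteness of $k_0/k$ by embedding $k_0$ in the integral closure $\overline R$ and using finiteness of $\overline R/R$, which is another route to the same fact; both are fine. I checked the points where a gap would most likely hide --- that $R\otimes_kk_1$ is local with $m_S=m_RS$, that all minimal primes of $R\otimes_kk_1$ have dimension $d$, that $T=R\otimes_kk_2$ is a complete (hence analytically irreducible) domain so Theorem~\ref{Theorem1} applies, and that $Q(\mathcal O_{X_T,y})=Q(T)$ --- and these all go through.
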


\begin{proof} 
There exists $c\in \ZZ_+$ such that $m_R^c\subset I_1$. Let $\hat R$ be the $m_R$-adic completion of $R$. Since the $I_n$ are $m_R$-primary, we have that 
$R/I_n\cong \hat R/I_n\hat R$ and $\ell_R(R/I_n)=\ell_{\hat R}(\hat R/I_n\hat R)$ for all $n$. $\hat R$ is reduced since $R$ is analytically unramified. Let
$\{q_1,\ldots,q_s\}$ be the minimal primes of $\hat R$. By Lemma \ref{Lemma5}, we reduce to proving the theorem for the families of ideals $\{I_n\hat R/q_i\}$ in
$\hat R/q_i$, for $1\le i\le s$. 
In the case of a minimal prime $q_i$ of $R$ such that $\dim R/q_i<d$, the limits
$$
\lim_{n\rightarrow \infty}\frac{\ell_R(R_i/I_nR_i)}{n^d}
$$
 are all zero, since
$\ell_R(R_i/I_nR_i)\le \ell_R(R_i/m_R^{nc}R_i)$ for all $n$.

We may thus assume that $R$ is a complete domain. $\hat R$ contains a coefficient field $k\cong R/m_R$ by the Cohen structure theorem,
as $R$ is complete and equicharacteristic. Let $k'$ be the separable closure of $k$ in $Q(R)$, and let $\overline R$ be the integral closure of $R$ in $Q(R)$.
We have that $k' \subset \overline R$.  $\overline R$ is a finitely generated $R$-module since $R$ is excellent. Let $n\subset \overline R$ be a maximal ideal lying over $m_R$. Then the residue field extension $R/m_R\rightarrow \overline R/n$ is finite. Since $k'\subset \overline R/n$, we have that $k'$ is a finite extension of $k$. By Corollary 4.5.11 \cite{EGAIV}, there exists a finite extension $L$ of $k$ (which can be taken to be Galois over $k$) such that if $q_1,\ldots,q_r$ are the minimal primes of $R\otimes_kL$, then each ring $R\otimes_kL/q_i$ is geometrically irreducible over $L$.

$R\otimes_kL$ is a reduced semi local ring by Lemma \ref{Lemma1}, and by Remark \ref{Remark4}, the residue field of all maximal ideals of $R\otimes_kL$ is $L$, which is a perfect field. By Remark \ref{Remark5} and Lemmas \ref{Lemma5} and \ref{Lemma2}, we reduce to the case where $R$ is a complete local domain
with perfect coefficient field $k$, such that $R$ is geometrically irreducible over $k$. Let $\pi:X\rightarrow \mbox{Spec}(R)$ be the normalization of the blow up of $m_R$. $\pi$ is projective and birational since $R$ is excellent. $m_R\mathcal O_X$ is locally principal, so $\pi^{-1}(m_R)$ has codimension 1 in $X$.
Since $X$ is normal, it is regular in codimension 1, so there exists a closed point $q\in X$ such that $\pi(q)=m_R$ and $S=\mathcal O_{X,q}$ is a regular local ring. Let $k'=S/m_S$. $k'$  is finite over $k$, and is thus a separable extension of the perfect field $k$.

Let $k''$ be a finite Galois extension of $k$ containing $k'$. Let $R'=R\otimes_kk''$. $R'$ is a local domain with residue field $k''$. $R'$ is complete by
Remark \ref{Remark5}. $S\otimes_kk''$ is regular and semi local by Remark \ref{Remark3}. Let $p\in S\otimes_kk''$ be a maximal ideal. Let $S'=(S\otimes_kk'')_p$.
There exist $f_0,\ldots,f_t\in Q(R)$ such that $S$ is a localization of $R[\frac{f_1}{f_0},\ldots,\frac{f_t}{f_0}]$ at a maximal ideal which necessarily contracts to $m_R$. Thus $S'$ essentially of finite type and birational over $R'$, since we can regard $f_0,\ldots,f_t\in R'$. Since $S'$ is a regular local ring and $k''=S'/m_{S'}=R'/m_{R'}$ by Remark \ref{Remark4}, we have that Theorem \ref{Theorem2} follows from Lemma \ref{Lemma2} and Theorem \ref{Theorem1}.
\end{proof}

\section{Some applications to asymptotic multiplicities}

\begin{Theorem}\label{Theorem4} Suppose that $R$ is a  local ring of dimension $d>0$ such that one of the following holds:
\begin{enumerate}
\item[1)] $R$ is regular or 
\item[2)] $R$ is analytically irreducible   with algebraically closed residue field or
\item[3)] $R$ is normal, excellent and equicharacteristic with perfect residue field.
\end{enumerate}
Suppose that $\{I_i\}$ and $\{J_i\}$ are graded families of nonzero ideals in $R$. Further suppose that $I_i\subset J_i$ for all $i$ and there exists $c\in\ZZ_+$ such that
\begin{equation}\label{eq60}
m_R^{ci}\cap I_i= m_R^{ci}\cap J_i
\end{equation}
 for all $i$. Then the limit
$$
\lim_{i\rightarrow \infty} \frac{\ell_R(J_i/I_i)}{i^{d}}
$$
exists.
\end{Theorem}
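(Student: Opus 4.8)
The plan is to adapt the proof of Theorem \ref{Theorem1}, carrying along the \emph{finite length} modules $J_i/I_i$ in place of $R/I_i$. Although $I_i$ and $J_i$ need not be $m_R$-primary (so $R/I_i$ and the semigroups of Theorem \ref{Theorem1} may be infinite), hypothesis (\ref{eq60}) forces $I_i$ and $J_i$ to agree after truncation by a valuation ideal, and this is exactly what the cone argument needs.

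First I would reduce to the case that $R$ is a complete local domain admitting a regular local ring $S$, essentially of finite type and birational over $R$, with $R/m_R=S/m_S$. From (\ref{eq60}) and the modular law one gets $J_i/I_i\hookrightarrow R/(I_i+m_R^{ci})$, so $J_i/I_i$ has finite length; hence $\ell_R(J_i/I_i)=\ell_{\hat R}(J_i\hat R/I_i\hat R)$, and flatness of completion makes it commute with the intersections in (\ref{eq60}), so $\{I_i\hat R\}\subseteq\{J_i\hat R\}$ again satisfy the hypotheses over $\hat R$. In each of the three cases $\hat R$ is an irreducible complete (hence excellent) ring: in case (1) it is regular and one takes $S=\hat R$; in case (2) it is a complete domain with algebraically closed residue field, and one takes $S=\mathcal O_{X,q}$ for a regular closed point $q$ over $m_R$ on the normalized blow-up of $m_R$, as in the proof of Theorem \ref{Theorem6}; in case (3) one argues as in the proof of Theorem \ref{Theorem2}, passing first to a finite Galois extension of the coefficient field to make $\hat R$ geometrically irreducible and then a second time to obtain a regular birational $S$ with matching residue field. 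The base changes involved are flat, hence preserve (\ref{eq60}), and the behaviour of $\ell_R(J_i/I_i)$ under them is governed by the evident analogue of Lemma \ref{Lemma2} for the finite length modules $J_i/I_i$ (same proof, comparing $k$-dimensions).

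Now assume $R$, $S$ are as above. I would construct $\nu$ on $Q(R)$ exactly as in the passage preceding Lemma \ref{Lemma7}: $y_1,\ldots,y_d$ a regular system of parameters of $S$, $\lambda_1,\ldots,\lambda_d\ge 1$ rationally independent, value group $\ZZ\lambda_1+\cdots+\ZZ\lambda_d$, residue field $R/m_R$; let $\alpha\in\ZZ_+$ be as in Lemma \ref{Lemma7}. Fix $0\ne h\in I_1$ and write $y_k=f_k/g_k$ with $f_k,g_k\in R$. Since (\ref{eq60}) with $c$ implies (\ref{eq60}) with any $c'\ge c$ (as $m_R^{c'i}\subseteq m_R^{ci}$), we may enlarge $c$ so that moreover $\alpha c\ge\nu(hf_k)$ and $\alpha c\ge\nu(hg_k)$ for $1\le k\le d$; put $\beta=\alpha c\in\ZZ_+$ and $\beta_i=\beta i$. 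Then $K_{\beta_i}\cap R\subseteq m_R^{ci}$ by Lemma \ref{Lemma7} (as $\beta_i=\alpha\cdot ci$), so $J_i\cap K_{\beta_i}=J_i\cap(K_{\beta_i}\cap R)\subseteq J_i\cap m_R^{ci}=m_R^{ci}\cap I_i\subseteq I_i$ by (\ref{eq60}), whence $J_i\cap K_{\beta_i}=I_i\cap K_{\beta_i}$ and therefore
$$\ell_R(J_i/I_i)=\ell_R\big(J_i/(J_i\cap K_{\beta_i})\big)-\ell_R\big(I_i/(I_i\cap K_{\beta_i})\big).$$
As $\nu\ge 1$ on $m_R\setminus\{0\}$ (because $m_R\subseteq m_S$), $K_{\beta_i}\cap R\supseteq m_R^{\beta_i}$ is $m_R$-primary, so the computation in (\ref{eq12}) applies to each term and gives
$$\ell_R(J_i/I_i)=\#\{(n_1,\ldots,n_d)\in\phi(J_i)\mid\textstyle\sum n_k\lambda_k<\beta_i\}-\#\{(n_1,\ldots,n_d)\in\phi(I_i)\mid\textstyle\sum n_k\lambda_k<\beta_i\}.$$
Now $\phi(I_i)\subseteq\phi(J_i)$, and any tuple in $\phi(J_i)\setminus\phi(I_i)$ satisfies $\sum n_k\lambda_k<\beta_i$ (else a lift lies in $K_{\beta_i}\cap J_i=K_{\beta_i}\cap I_i\subseteq I_i$), hence also $\sum n_k\le\sum n_k\lambda_k<\beta i$ by (\ref{eq9}); so the displayed difference equals $\#\Gamma^J_i-\#\Gamma^I_i$, where $\Gamma^I\subseteq\Gamma^J$ are the subsemigroups of $\NN^{d+1}$
$$\Gamma^J=\{(n_1,\ldots,n_d,i)\mid(n_1,\ldots,n_d)\in\phi(J_i),\ \textstyle\sum n_k\le\beta i\},\qquad\Gamma^I=\{(n_1,\ldots,n_d,i)\mid(n_1,\ldots,n_d)\in\phi(I_i),\ \textstyle\sum n_k\le\beta i\}.$$

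Finally I would check (\ref{Cone1})--(\ref{Cone3}) for $\Gamma^I$ and $\Gamma^J$: (\ref{Cone1}) because $\phi(I_0)=\phi(J_0)=\phi(R)\ni 0=\phi(1)$ and no nonzero point of $\NN^d$ has coordinate sum $\le 0$; (\ref{Cone2}) because both lie in the semigroup $\{(n,i)\in\NN^{d+1}\mid\sum n_k\le\beta i\}$, generated by its finitely many elements with last coordinate $1$; and (\ref{Cone3}) for $\Gamma^I$ (hence for $\Gamma^J$) because $hf_k,hg_k\in I_1$ with $\sum_l\phi(hf_k)_l\le\nu(hf_k)\le\beta$ and likewise for $hg_k$, so $(\phi(hf_k),1),(\phi(hg_k),1)\in\Gamma^I$, whence $(e_k,0)=(\phi(hf_k),1)-(\phi(hg_k),1)\in G(\Gamma^I)$ for all $k$ (using $\phi(hf_k)-\phi(hg_k)=\phi(y_k)=e_k$), and then $(0,\ldots,0,1)=(\phi(hg_1),1)-\sum_l\phi(hg_1)_l(e_l,0)\in G(\Gamma^I)$. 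By Theorem \ref{ConeTheorem1}, $\lim_i\#\Gamma^J_i/i^d={\rm vol}(\Delta(\Gamma^J))$ and $\lim_i\#\Gamma^I_i/i^d={\rm vol}(\Delta(\Gamma^I))$, so $\lim_i\ell_R(J_i/I_i)/i^d={\rm vol}(\Delta(\Gamma^J))-{\rm vol}(\Delta(\Gamma^I))$ exists.

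I expect the main obstacle to be the identity $J_i\cap K_{\beta_i}=I_i\cap K_{\beta_i}$ and its exploitation: it is what lets one replace the possibly infinite datum $R/I_i$ of Theorem \ref{Theorem1} by the finite length module $J_i/I_i$, and it is the sole place where (\ref{eq60}) enters. The enlargement of $c$ — available precisely because (\ref{eq60}) is monotone in $c$ — then substitutes for the hypothesis (\ref{eq8}) of Theorem \ref{Theorem1} in verifying (\ref{Cone3}). Everything else is a transcription of the arguments in the proofs of Theorems \ref{Theorem1}, \ref{Theorem6}, and \ref{Theorem2}.
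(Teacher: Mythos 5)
Your proposal is correct and follows essentially the same route as the paper: reduce to a complete local domain with a regular birational $S$ of matching residue field, construct the valuation $\nu$ and apply Lemma \ref{Lemma7}, use hypothesis (\ref{eq60}) to show $K_{\beta i}\cap I_i=K_{\beta i}\cap J_i$ so that $\ell_R(J_i/I_i)$ becomes a difference of semigroup counts, and then invoke Theorem \ref{ConeTheorem1} after verifying (\ref{Cone1})--(\ref{Cone3}). You are actually somewhat more explicit than the paper on a couple of points the paper leaves implicit, namely that enlarging $c$ preserves (\ref{eq60}) and that in Case 3 normality (plus the coefficient field being algebraically closed in $Q(R)$) keeps the ring a local domain under completion and Galois base change, so no minimal-prime decomposition is needed.
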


\begin{Remark} An analytic local domain $R$ satisfies the hypotheses of 2) of Theorem \ref{Theorem4}. 
The fact that $R$ is analytically irreducible ($\hat R$ is a domain) follows from Corollary 18.9.2 \cite{EGAIV}.
\end{Remark}

\begin{proof} We will apply the method of Theorem \ref{Theorem1}. When $R$ is regular we take $S=R$ and in Case 2), we construct $S$ by the argument of the proof of Theorem \ref{Theorem6}. We will consider Case 3) at the end of the proof.

Let $\nu$ be the valuation of $Q(R)$ constructed from $S$ in the proof of Theorem \ref{Theorem1}, with associated valuation ideals $K_{\lambda}$ in the valuation ring $V_{\nu}$ of $\nu$. 

Apply Lemma \ref{Lemma7} if $R$ is not regular, to find
$\alpha\in \ZZ_+$ such that 
$$
K_{\alpha n}\cap R\subset m_R^n
$$
for all $n\in \ZZ_+$. When $R$ is regular, so that $R=S$, the existence of such an $\alpha$ follows directly from (\ref{eq10}).
We will use the function $\phi:R\setminus\{0\}\rightarrow \NN^{d+1}$ of the proof of Theorem \ref{Theorem1}.
We have that 
$$
K_{\alpha cn}\cap I_n=K_{\alpha cn}\cap J_n
$$
for all $n$. Thus
\begin{equation}\label{eq31}
\ell_R(J_n/I_n)=\ell_R(J_n/K_{\alpha cn}\cap J_n)-\ell_R(I_n/K_{\alpha cn}\cap I_n)
\end{equation}
for all $n$. Let $\beta=\alpha c$ and 
$$
\Gamma(J_*)=\{(n_1,\ldots,n_d,i)\mid (n_1,\ldots,n_d)\in \phi (J_i)\mbox{ and }n_1+\cdots+n_d\le \beta i\}, 
$$
and
$$
\Gamma(I_*)=\{(n_1,\ldots,n_d,i)\mid (n_1,\ldots,n_d)\in \phi (I_i)\mbox{ and }n_1+\cdots+n_d\le \beta i\}.
$$
We have that 
\begin{equation}\label{eq32}
\ell_R(J_n/I_n)=\# \Gamma(J_*)_n-\# \Gamma(I_*)_n
\end{equation}
as explained in the proof of Theorem \ref{Theorem1}. As in the proof of Theorem \ref{Theorem1}, we have that $\Gamma(J_*)$ and $\Gamma(I_*)$ satisfy the conditions
(\ref{Cone1}) - (\ref{Cone3}). Thus
$$
\lim_{n\rightarrow \infty} \frac{\# \Gamma(J_*)_n}{n^d}={\rm vol}(\Delta(\Gamma(J_*))\mbox{ and }\lim_{n\rightarrow \infty} \frac{\# \Gamma(I_*)_n}{n^d}
={\rm vol}(\Delta(\Gamma(I_*))
$$
by Theorem \ref{ConeTheorem1}. 
The theorem (in Cases 1) or 2))now follows from (\ref{eq32}).

Now suppose that $R$ satisfies the assumptions of Case 3). Then the $m_R$-adic  completion $\hat R$ 
satisfies the assumptions of Case 3). 

Suppose that $R$ satisfies the assumptions of Case 3), and $R$ is $m_R$-adically complete. Let $k$ be a coefficient field of $R$. 
The algebraic closure of $k$ in $Q(R)$ is contained in $R$, so it is contained in $R/m_R=k$. Thus $k$ is algebraically closed in $Q(R)$.
Suppose that $k'$ is a finite Galois extension of $k$.  $Q(R)\otimes_kk'$ is a field by Corollary 2, page 198 \cite{ZS1}, and thus $R'=R\otimes_kk'$ is a domain.
$R'$ is a local ring with residue field $k'$ since $R'/m_RR'\cong R/m_R\otimes_kk'\cong k'$.  $R'$ is normal by Corollary 6.14.2 \cite{EGAIV}. Thus $R'$ satisfies the assumptions of Case 3).

Thus in the reductions in the proof of Theorem \ref{Theorem2} to \ref{Theorem1}, the only extensions which we need consider are local homomorphisms
$R\rightarrow R'$ which are either $m_R$-adic completion or a base extension by a Galois field extension. These extensions are all flat, and
$m_RR'=m_{R'}$. Thus
$$
m_S^{nc}\cap I_nS=m_R^{nc}S\cap I_nS=(m_R^{nc}\cap I_n)S=(m_R^{nc}\cap J_n)S=m_R^{nc}S\cap J_nS=m_S^{nc}\cap J_nS
$$
for all $n$. Thus the condition (\ref{eq60}) is preserved, so we reduce to the Case 2) of this theorem, and conclude that Theorem \ref{Theorem4} is true in Case 3).

\end{proof}

If $R$ is a local ring and $I$ is an ideal in $R$ then the saturation of $I$ is 
$$
I^{\rm sat}=I:m_R^{\infty}=\cup_{k=1}^{\infty}I:m_R^k.
$$

\begin{Corollary}\label{Corollary5} Suppose that $R$ is a  local ring of dimension $d>0$ such that  one of the following holds: 
\begin{enumerate}
\item[1)] $R$ is regular or 
\item[2)] $R$ is analytically irreducible  with algebraically closed residue field or
\item[3)] $R$ is normal, excellent and equicharacteristic with perfect residue field.
\end{enumerate}
Suppose that $I$ is an ideal in $R$. Then the limit
$$
\lim_{i\rightarrow \infty} \frac{\ell_R((I^i)^{\rm sat}/I^i)}{i^{d}}
$$
exists.

\end{Corollary}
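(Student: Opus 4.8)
The plan is to deduce Corollary~\ref{Corollary5} from Theorem~\ref{Theorem4}, applied with $I_i=I^i$ and $J_i=(I^i)^{\rm sat}$. First I would verify that $\{I^i\}$ and $\{(I^i)^{\rm sat}\}$ are graded families of nonzero ideals with $I_i\subset J_i$: one has $I^0=R=R^{\rm sat}$ and $I^iI^j\subset I^{i+j}$, while if $m_R^a x\subset I^i$ and $m_R^b y\subset I^j$ then $m_R^{a+b}xy\subset I^{i+j}$, so $(I^i)^{\rm sat}(I^j)^{\rm sat}\subset(I^{i+j})^{\rm sat}$, and $I^i\subset(I^i)^{\rm sat}$ is clear. (If $I=0$, or if $I$ is $m_R$-primary, the statement is trivial or is the usual Hilbert--Samuel theory, so we may assume $I$ is a proper nonzero ideal.) Thus the only hypothesis of Theorem~\ref{Theorem4} that needs work is the existence of $c\in\ZZ_+$ with $m_R^{ci}\cap I^i=m_R^{ci}\cap(I^i)^{\rm sat}$ for all $i$; since $I^i\subset(I^i)^{\rm sat}$, this amounts to $m_R^{ci}\cap(I^i)^{\rm sat}\subset I^i$ for all $i$.

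Next I would reduce to the case where $R$ is complete. The module $(I^i)^{\rm sat}/I^i$ is the $m_R$-torsion submodule $H^0_{m_R}(R/I^i)$ of $R/I^i$, hence of finite length, so its length is unchanged under the flat base change $R\to\hat R$; as $m_R\hat R=m_{\hat R}$, colon ideals and therefore saturations commute with this base change, so $\ell_R((I^i)^{\rm sat}/I^i)=\ell_{\hat R}((I^i\hat R)^{\rm sat}/I^i\hat R)$. Since $\hat R$ again satisfies one of the three hypotheses (regular passes to regular; analytically irreducible with algebraically closed residue field passes to a complete local domain with algebraically closed residue field; normal excellent equicharacteristic with perfect residue field passes to the same), it suffices to treat $\hat R$ with the families $\{I^i\hat R\}$, $\{(I^i\hat R)^{\rm sat}\}$; so we may assume $R$ is complete, hence excellent.

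The heart of the matter is then to produce $c$ over a complete excellent $R$, and here I would combine two linear estimates. First, a uniform Artin--Rees bound (available since $R$ is excellent): a constant $c_1$ with $m_R^n\cap\mathfrak b\subset m_R^{\,n-c_1}\mathfrak b$ for every ideal $\mathfrak b\subset R$ and every $n\ge c_1$. Second, and this is the crux, a linear bound on the saturation exponent of $I^i$: a constant $c_0$ with $m_R^{c_0 i}(I^i)^{\rm sat}\subset I^i$ for all $i$, i.e.\ $m_R^{c_0 i}$ annihilates $H^0_{m_R}(R/I^i)$. Granting both, take $c=c_0+c_1$; then for each $i$,
$$
m_R^{ci}\cap(I^i)^{\rm sat}\ \subset\ m_R^{\,ci-c_1}(I^i)^{\rm sat}\ \subset\ m_R^{c_0 i}(I^i)^{\rm sat}\ \subset\ I^i ,
$$
so all hypotheses of Theorem~\ref{Theorem4} are met and the limit exists.

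The main obstacle is the linear bound $m_R^{c_0 i}(I^i)^{\rm sat}\subset I^i$. This is the local-ring counterpart of the linearity in $i$ of the Castelnuovo--Mumford regularity of the powers $I^i$ (the theorem of Cutkosky--Herzog--Trung and of Kodiyalam in the graded case), and in the complete local setting I would establish it by passing to a presentation $R=T/\mathfrak q$ with $T$ complete regular local, or to the Rees algebra $R[It]$ and its local cohomology --- with some care, since the natural graded module $\bigoplus_i R/I^i$ is not finitely generated over $R[It]$, which is precisely why only a linear, and not a bounded, annihilator exponent is available. Everything else --- the graded-family checks, the reduction to $\hat R$, the uniform Artin--Rees input, and the final invocation of Theorem~\ref{Theorem4} --- is routine, and the estimates needed are in the spirit of the treatment of the $\varepsilon$-multiplicity in \cite{UV}, \cite{CHS}.
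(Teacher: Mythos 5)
Your overall strategy — reduce to Theorem~\ref{Theorem4} with $I_i=I^i$, $J_i=(I^i)^{\rm sat}$, and produce a linear constant $c$ with $m_R^{ci}\cap I^i = m_R^{ci}\cap(I^i)^{\rm sat}$ — is exactly the paper's strategy, and your preliminary checks (graded families, trivial cases, the identification $(I^n)^{\rm sat}/I^n\cong H^0_{m_R}(R/I^n)$) are fine. Where you diverge is in how the linear constant is produced, and this is where your argument has a genuine gap: you correctly isolate as ``the crux'' the statement that there is a $c_0$ with $m_R^{c_0 i}(I^i)^{\rm sat}\subset I^i$ for all $i$, but you do not prove it; you only gesture at a presentation-/Rees-algebra argument. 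The paper closes this by a direct citation of Swanson's Theorem~3.4 \cite{S}, which gives, for every $n$, an irredundant primary decomposition $I^n=q_1(n)\cap\cdots\cap q_s(n)$ with $q_1(n)$ the $m_R$-primary component and $m_R^{nc}\subset q_1(n)$. From this the needed equality falls out \emph{without} any Artin--Rees input: since $(I^n)^{\rm sat}=q_2(n)\cap\cdots\cap q_s(n)$, one has $m_R^{nc}\cap(I^n)^{\rm sat}\subset q_1(n)\cap q_2(n)\cap\cdots\cap q_s(n)=I^n$ directly. Notice this also yields your $c_0$-bound as an immediate corollary ($m_R^{nc}(I^n)^{\rm sat}\subset q_1(n)\cap(I^n)^{\rm sat}=I^n$), so your uniform Artin--Rees step is an unnecessary detour: you use it only to convert the product inclusion $m_R^{c_0 i}(I^i)^{\rm sat}\subset I^i$ back into the intersection inclusion $m_R^{ci}\cap(I^i)^{\rm sat}\subset I^i$, whereas Swanson's result gives the intersection form at once. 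Your reduction to $\hat R$ is likewise superfluous: Swanson's theorem holds over any Noetherian ring, so no completion or excellence hypothesis is needed to get $c$. In short, the route you sketch would work once the linear annihilator bound is actually established, but as written that step is left open, and the paper's direct appeal to \cite{S} is both shorter and avoids the auxiliary uniform Artin--Rees and completion steps entirely.
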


Since $(I^n)^{\rm sat}/I^n\cong H^0_{m_R}(R/I^n)$, the epsilon multiplicity of Ulrich and Validashti \cite{UV}
$$
\epsilon(I)=\limsup \frac{\ell_R(H^0_{m_R}(R/I^n))}{n^d/d!}
$$
exists as a limit, under the assumptions of Corollary \ref{Corollary5}.

Corollary \ref{Corollary5} is proven for more general families of modules when $R$ is a local domain which is essentially of finite type over a perfect field $k$ such that $R/m_R$ is algebraic over $k$ in \cite{C}. The limit in Corollary \ref{Corollary5} can be irrational, as shown in \cite{CHST}.

\begin{proof} By Theorem 3.4 \cite{S}, there exists $c\in\ZZ_+$ such that each power $I^n$ of $I$ has an irredundant primary decomposition 
$$
I^n=q_1(n)\cap\cdots\cap q_s(n)
$$
where $q_1(n)$ is $m_R$-primary and $m_R^{nc}\subset q_1(n)$ for all $n$. Since $(I^n)^{\rm sat}=q_2(n)\cap \cdots\cap q_s(n)$,
we have that 
$$
I^n\cap m_R^{nc}=m_R^{nc}\cap q_2(n)\cap \cdots \cap q_s(n)=m_R^{nc}\cap (I^n)^{\rm sat}
$$
for all $n\in \ZZ_+$. Thus the corollary follows from Theorem \ref{Theorem4}, taking $I_i=I^i$ and $J_i=(I^i)^{\rm sat}$.

\end{proof}

A stronger version of the previous corollary is true. 
The following corollary proves a formula proposed by Herzog, Puthenpurakal and Verma in the introduction to \cite{HPV}.

Suppose that $R$ is a ring, and $I,J$ are ideals in $R$. Then the $n^{\rm th}$ symbolic power of $I$ with respect to $J$ is
$$
I_n(J)=I^n:J^{\infty}=\cup_{i=1}^{\infty}I^n:J^i.
$$

\begin{Corollary}\label{Cor5} Suppose that $R$ is a local domain of dimension $d$ such that one of the following holds:
\begin{enumerate}
\item[1)] $R$ is regular or 
\item[2)] $R$ is normal and excellent of equicharacteristic 0 or
\item[3)] $R$ is essentially of finite type over a field  of characteristic zero.
\end{enumerate}
 Suppose that $I$ and $J$ are ideals in $R$.  
 Let $s$ be the constant  limit dimension of $I_n(J)/I^n$ for $n\gg 0$. Then
$$
\lim_{n\rightarrow \infty} \frac{e_{m_R}(I_n(J)/I^n)}{n^{d-s}}
$$
exists.
\end{Corollary}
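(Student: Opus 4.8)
The plan is to replace the multiplicity $e_{m_R}(I_n(J)/I^n)$ by a finite sum of lengths via the associativity formula, and then to invoke Corollary \ref{Corollary5} in a ring of dimension $d-s$. Write $M_n=I_n(J)/I^n$; this is a finitely generated $R$-module with $\dim M_n=s$ for $n\gg 0$ (we may assume $I\neq 0$ and $J\neq 0$, the remaining cases being trivial). Since $M_n\subset R/I^n$, Brodmann's theorem on asymptotic associated primes shows that $\bigcup_n\mathrm{Ass}(R/I^n)$ is finite and that $\mathrm{Ass}(R/I^n)$ equals a fixed finite set $A^*$ for $n\gg 0$. If $I^n=\bigcap_j q_j(n)$ is an irredundant primary decomposition with $P_j(n)=\sqrt{q_j(n)}$, then $I_n(J)=\bigcap_{J\not\subset P_j(n)}q_j(n)$, so $\mathrm{Supp}(M_n)=\bigcup_{J\subset P_j(n)}V(P_j(n))$. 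Hence, for all $n\gg 0$, the primes $\mathfrak p$ with $\dim R/\mathfrak p=s$ and $(M_n)_{\mathfrak p}\neq 0$ form the fixed finite set $\mathcal P=\{\mathfrak p\in A^*: J\subset\mathfrak p,\ \dim R/\mathfrak p=s\}$, and each $\mathfrak p\in\mathcal P$ is a minimal element of $\{P\in\mathrm{Ass}(R/I^n): J\subset P\}$, since a prime of maximal coheight $s$ in that set cannot strictly contain another member.

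The associativity formula then gives, for $n\gg 0$,
$$
e_{m_R}(M_n)=\sum_{\mathfrak p\in\mathcal P}e_{m_R}(R/\mathfrak p)\,\ell_{R_{\mathfrak p}}\big((M_n)_{\mathfrak p}\big),
$$
with the coefficients $e_{m_R}(R/\mathfrak p)$ positive integers independent of $n$. In each of the three cases $R$ is an excellent local domain, hence quasi-unmixed, so $\dim R/\mathfrak p+\dim R_{\mathfrak p}=d$, i.e. $\dim R_{\mathfrak p}=d-s$ for every $\mathfrak p\in\mathcal P$. Thus it suffices to prove that, for each fixed $\mathfrak p\in\mathcal P$, the limit $\lim_{n}\ell_{R_{\mathfrak p}}((M_n)_{\mathfrak p})/n^{d-s}$ exists.

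Next I would identify $(M_n)_{\mathfrak p}$ with a saturation quotient. Localizing the primary decomposition at $\mathfrak p$, the surviving components of $I^nR_{\mathfrak p}$ are the $q_j(n)R_{\mathfrak p}$ with $P_j(n)\subset\mathfrak p$; among these, by the minimality of $\mathfrak p$, the only one whose prime contains $J$ is the $\mathfrak pR_{\mathfrak p}$-primary component, while all the others survive saturation by $\mathfrak pR_{\mathfrak p}$. Hence
$$
I_n(J)R_{\mathfrak p}=(I^nR_{\mathfrak p}):(\mathfrak pR_{\mathfrak p})^\infty=(I^nR_{\mathfrak p})^{\mathrm{sat}},
$$
so $\ell_{R_{\mathfrak p}}((M_n)_{\mathfrak p})=\ell_{R_{\mathfrak p}}\big((I^nR_{\mathfrak p})^{\mathrm{sat}}/I^nR_{\mathfrak p}\big)$ for all $n\gg 0$. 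Now Corollary \ref{Corollary5}, applied to the $(d-s)$-dimensional local ring $R_{\mathfrak p}$ and the ideal $IR_{\mathfrak p}$, gives the existence of $\lim_n\ell_{R_{\mathfrak p}}((I^nR_{\mathfrak p})^{\mathrm{sat}}/I^nR_{\mathfrak p})/n^{d-s}$, and summing over $\mathcal P$ with the fixed coefficients $e_{m_R}(R/\mathfrak p)$ yields the corollary. To invoke Corollary \ref{Corollary5} one checks that $R_{\mathfrak p}$ satisfies one of its hypotheses: if $R$ is regular so is $R_{\mathfrak p}$; if $R$ is normal, excellent and equicharacteristic $0$, so is $R_{\mathfrak p}$; and if $R$ is essentially of finite type over a field of characteristic zero, then $R_{\mathfrak p}$ is a domain essentially of finite type over that field, hence reduced and excellent with perfect residue field, and one either checks that the reductions underlying Theorems \ref{Theorem1} and \ref{Theorem2} go through for such a ring — analytic irreducibility being needed only through Lemma \ref{Lemma7} and being automatic for the complete local domains that occur — or replaces $R_{\mathfrak p}$ by the local ring at a maximal ideal of its (finite) normalization and compares the two lengths.

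I expect the main obstacle to be the uniformity built into the first two paragraphs: that the set $\mathcal P$ of contributing primes is finite and, crucially, eventually independent of $n$, and that the identification $(M_n)_{\mathfrak p}=(I^nR_{\mathfrak p})^{\mathrm{sat}}/I^nR_{\mathfrak p}$ is valid simultaneously for all large $n$ — both of which rest on the eventual stability of $\mathrm{Ass}(R/I^n)$ together with the behaviour of the primary components of $I^n$ under localization at $\mathfrak p$. A secondary point is case (3), where $R_{\mathfrak p}$ need not be normal, so that one must pass to its normalization (or slightly extend Corollary \ref{Corollary5}) before the length theorem applies.
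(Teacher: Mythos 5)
Your approach is the same as the paper's: Brodmann's stability of $\mathrm{Ass}(R/I^n)$, the primary decomposition identity $I_n(J)=\bigcap_{J\not\subset p_i}q_i(n)$, the additivity formula to reduce to localizations at the finite set of primes of coheight $s$, the identification $(I_n(J)/I^n)_{\mathfrak p}=(I^nR_{\mathfrak p})^{\mathrm{sat}}/I^nR_{\mathfrak p}$, and then invoking the earlier existence theorem for $\lim \ell((I^nR_{\mathfrak p})^{\mathrm{sat}}/I^nR_{\mathfrak p})/n^{d-s}$. For cases 1) and 2) this is complete: $R_{\mathfrak p}$ inherits regularity in case 1), and normality, excellence and equicharacteristic $0$ (hence perfect residue field) in case 2), so Theorem \ref{Theorem4}/Corollary \ref{Corollary5} applies directly.

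The gap is in case 3). There $R_{\mathfrak p}$ is a domain essentially of finite type over a field of characteristic zero but need not be normal, and none of the three hypotheses of Theorem \ref{Theorem4} or Corollary \ref{Corollary5} then holds. Your first proposed patch — that ``analytic irreducibility is needed only through Lemma \ref{Lemma7} and is automatic for the complete local domains that occur'' — does not work: the $m_{R_{\mathfrak p}}$-adic completion of a non-normal $R_{\mathfrak p}$ need not be a domain, and the reductions in the proof of Theorem \ref{Theorem4} for the non-$m_R$-primary setting are deliberately restricted to flat local maps with $m_RR'=m_{R'}$ precisely so that the crucial hypothesis (\ref{eq60}) $m_R^{ci}\cap I_i=m_R^{ci}\cap J_i$ is preserved; passing to $\hat R/q_j$ is not such a map, so the argument does not reduce to the complete domain case here the way it does in Theorem \ref{Theorem2}. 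Your second patch (compare with the normalization) could in principle be made to work, but it requires a genuine estimate comparing $\ell_{R_{\mathfrak p}}((I^nR_{\mathfrak p})^{\mathrm{sat}}/I^nR_{\mathfrak p})$ with the corresponding length over the semilocal normalization, and you do not carry it out. The paper sidesteps all of this in case 3) by citing Corollary 1.5 of \cite{C}, which proves the needed epsilon-multiplicity limit directly for domains essentially of finite type over a perfect field by geometric (Fujita approximation) methods. Without that external input, case 3) of your argument is incomplete.
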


\begin{proof} There exists a positive integer $n_0$ such that the  set of associated primes of $R/I^n$ stabilizes for
$n\ge n_0$ by  \cite{Br}. Let $\{p_1,\ldots, p_t\}$ be this set of associated primes.  We thus  have irredundant primary decompositions
for $n\ge n_0$,
\begin{equation}\label{eq**}
I^n=q_1(n)\cap \cdots \cap q_t(n),
\end{equation}
where $q_i(n)$ are $p_i$-primary.

We further have that 
\begin{equation}\label{eq*}
I^n:J^{\infty}=\cap_{J\not\subset p_i}q_i(n).
\end{equation}
Thus $\dim I_n(J)/I^n$ is constant for $n\ge n_0$. Let $s$ be this limit dimension. The set 
$$
A=\{p\in \cup_{n\ge n_0}{\rm Ass}(I_n(J)/I^n)\mid n\ge n_0\mbox{ and }\dim R/p=s\}
$$
is a finite set. Moreover, every such prime is in ${\rm Ass}(I_n(J)/I^n$ for all $n\ge n_0$. For $n\ge n_0$, we have 
by the additivity formula (V-2 \cite{Se} or Corollary 4.6.8, page 189 \cite{BH}), that
$$
e_{m_R}(I_n(J)/I^n)=\sum_{p}\ell_{R_p}((I_n(J)/I^n)_p)e(m_{R/p})
$$
where the sum is over the finite set of primes $p\in \mbox{Spec}(R)$ such that $\dim R/p=s$. This sum is thus over the finite set $A$.

Suppose that $p\in A$ and $n\ge n_0$. Then 
$$
I^n_p=\cap q_i(n)_p
$$
where the intersection is over the $q_i(n)$ such that $p_i\subset p$, and
$$
I_n(J)=\cap q_i(n)_p
$$
where the intersection is over the $q_i(n)$ such that $J\not\subset p_i$ and $p_i\subset p$. Thus  there exists an index $i_0$ such that $p_{i_0}=p$ and 
$$
I^n_p=q_{i_0}(n)_p\cap I_n(J)_p.
$$
By (\ref{eq**}), 
$$
(I^n_p)^{\rm sat}=I_n(J)_p
$$
for $n\ge n_0$. Since $R_p$ satisfies one of the conditions 1) or 3) of Theorem \ref{Theorem4}, or the conditions of Corollary 1.5 \cite{C}, and $\dim R_p=d-s$ (as $R$ is universally catenary), the limit
$$
\lim_{n\rightarrow \infty} \frac{\ell_R((I_n(J)/I_n)_p)}{n^{d-s}}
$$
exists.
\end{proof}

\begin{Theorem}\label{Theorem5} Suppose that $R$ is a $d$-dimensional local ring such that either
\begin{enumerate}
\item[1)] $R$ is regular or
\item[2)] $R$ is analytically unramified and  equicharacteristic, with perfect residue field.
\end{enumerate}
 Suppose that $\{I_i\}$ is a graded family of $m_R$-primary ideals in $R$. Then 
$$
\lim_{n\rightarrow \infty}\frac{\ell_R(R/I_n)}{n^d/d!}=\lim_{p\rightarrow \infty}\frac{e(I_p)}{p^d}.
$$
Here $e(I_p)$ is the multiplicity
$$
e(I_p)=e_{I_p}(R)=\lim_{k\rightarrow \infty} \frac{\ell_R(R/I_p^k)}{k^d/d!}.
$$
\end{Theorem}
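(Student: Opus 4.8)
The plan is to set $V=\lim_{n\to\infty}\ell_R(R/I_n)/(n^d/d!)$ — a limit that exists by Theorem \ref{Theorem0} in case 1) and by Theorem \ref{Theorem2} in case 2) — and to prove that $\lim_{p\to\infty}e(I_p)/p^d=V$ as well, so that both sides of the asserted identity equal $V$. One inequality is elementary: since $\{I_n\}$ is a graded family we have $I_p^k\subset I_{pk}$, hence $\ell_R(R/I_p^k)\ge\ell_R(R/I_{pk})$ for all $k$; dividing by $k^d/d!$ and letting $k\to\infty$ gives $e(I_p)\ge V\,p^d$, so $\liminf_{p\to\infty}e(I_p)/p^d\ge V$. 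The content of the theorem is the reverse bound, and it is here that the cone estimate of Theorem \ref{ConeTheorem2} is needed.

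For the reverse bound I would first reduce, exactly as in the proof of Theorem \ref{Theorem2}, to the hypotheses of Theorem \ref{Theorem1}: $R$ a complete analytically irreducible local domain of dimension $d$ carrying a regular local ring $S$ that is essentially of finite type and birational over $R$ and induces an isomorphism $R/m_R\to S/m_S$ (when $R$ is regular one takes $S=R$). Each step of that reduction is either a flat local homomorphism ($m_R$-adic completion, base change by a finite Galois extension of a coefficient field) or passage to a finite direct sum of residue rings. Under these, $\ell_R(R/I_n)$ is altered only in order $n^{d-1}$ by Lemma \ref{Lemma5}, while the Hilbert--Samuel multiplicities $e(I_p)=\lim_k\ell_R(R/I_p^k)/(k^d/d!)$ transform compatibly — by the additivity formula for multiplicities (V-2 of \cite{Se}) for the decompositions into analytic components, and by Lemma \ref{Lemma2} applied to the ideals $I_p^k$ and passed to the limit in $k$ for the field extensions. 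Components of dimension $<d$ contribute $0$ to both $V$ and to $\lim_p e(I_p)/p^d$, so it suffices to treat each $d$-dimensional component.

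Working now in the situation of Theorem \ref{Theorem1}, I retain the valuation $\nu$, the map $\phi\colon R\setminus\{0\}\to\NN^d$, the constant $\beta=\alpha c$, and the semigroups $\Gamma,\Gamma'\subset\NN^{d+1}$ constructed there, so that $\lim_i\#\Gamma_i/i^d={\rm vol}(\Delta(\Gamma))$, $\lim_i\#\Gamma'_i/i^d={\rm vol}(\Delta(\Gamma'))$ and ${\rm vol}(\Delta(\Gamma'))-{\rm vol}(\Delta(\Gamma))=V/d!$. Fix $p$ and write $|a|=n_1+\cdots+n_d$ for $a=(n_1,\ldots,n_d)$. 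Since $m_R^{cp}\subset I_1^p\subset I_p$, Lemma \ref{Lemma7} (or, when $R=S$, the inclusion (\ref{eq10})) gives $K_{\beta pk}\cap R\subset m_R^{cpk}\subset I_p^k$, so the argument that yields (\ref{eq15}), applied to the ideal $I_p^k$ with the bound $\beta pk$ in place of $\beta i$, gives
\[
\ell_R(R/I_p^k)=\#\Gamma'_{pk}-\#\{a\in\phi(I_p^k)\mid |a|\le\beta pk\}.
\]
Because $\nu$ is a valuation and $\lambda_1,\ldots,\lambda_d$ are rationally independent, $\phi(fg)=\phi(f)+\phi(g)$, so $k\Gamma_p$, regarded as a subset of $\NN^d$, is contained in $\{a\in\phi(I_p^k)\mid |a|\le\beta pk\}$, which in turn is contained in $\Gamma_{pk}$ since $I_p^k\subset I_{pk}$. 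Hence
\[
\#\Gamma'_{pk}-\#\Gamma_{pk}\ \le\ \ell_R(R/I_p^k)\ \le\ \#\Gamma'_{pk}-\#(k\Gamma_p),
\]
and dividing by $k^d/d!$ and letting $k\to\infty$ (the limits of $\#\Gamma_{pk}/k^d$ and $\#\Gamma'_{pk}/k^d$ existing by Theorem \ref{ConeTheorem1}) gives
\[
d!\,p^d\bigl({\rm vol}(\Delta(\Gamma'))-{\rm vol}(\Delta(\Gamma))\bigr)\ \le\ e(I_p)\ \le\ d!\,p^d\,{\rm vol}(\Delta(\Gamma'))-d!\,\liminf_{k\to\infty}\frac{\#(k\Gamma_p)}{k^d}.
\]

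Finally, given $\epsilon>0$, Theorem \ref{ConeTheorem2} supplies $p_0(\epsilon)$ such that for $p\ge p_0(\epsilon)$ the limit $\lim_k\#(k\Gamma_p)/(k^dp^d)$ exists and is $\ge{\rm vol}(\Delta(\Gamma))-\epsilon$; inserting this into the last display and using ${\rm vol}(\Delta(\Gamma'))-{\rm vol}(\Delta(\Gamma))=V/d!$ gives $e(I_p)\le p^d(V+d!\,\epsilon)$ for $p\ge p_0(\epsilon)$. Combined with the elementary lower bound, $V\le e(I_p)/p^d\le V+d!\,\epsilon$ for all large $p$, and letting $\epsilon\to 0$ gives $\lim_p e(I_p)/p^d=V=\lim_n\ell_R(R/I_n)/(n^d/d!)$, which is the assertion. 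I expect the main obstacle to be not the semigroup estimates — once one is reduced to the setting of Theorem \ref{Theorem1}, the sandwich $k\Gamma_p\subset\phi(I_p^k)\subset\Gamma_{pk}$ together with Theorem \ref{ConeTheorem2} does the work — but in making the reduction of the second paragraph rigorous: ensuring that multiplicity, and not merely length, descends correctly through completion, the splitting into analytic components, and the auxiliary finite field extensions, and that the perfectness of the residue field prevents nilpotents from being introduced under those base changes.
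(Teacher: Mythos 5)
Your proposal is correct and follows essentially the same route as the paper: reduce via completion, passage to analytic components (Lemma \ref{Lemma5}), and Galois base change (Lemma \ref{Lemma2}) to the setting of Theorem \ref{Theorem1}, then use the sandwich $k\Gamma_p\subset\{a\in\phi(I_p^k):|a|\le\beta pk\}\subset\Gamma_{pk}$ together with Theorem \ref{ConeTheorem2} to squeeze $e(I_p)/p^d$ against $V$. Your isolation of the elementary lower bound $e(I_p)\ge Vp^d$ coming from $I_p^k\subset I_{pk}$ is a clean simplification—the paper instead invokes Theorem \ref{ConeTheorem2} a second time on $\Gamma(R)$, which is superfluous since $\lim_k\#\Gamma(R)_{kp}/(kp)^d$ is already fixed by Theorem \ref{ConeTheorem1}.
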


Theorem \ref{Theorem5} is proven for valuation ideals associated to an Abhyankar valuation in a regular local ring which is essentially of finite type over a field in  \cite{ELS}, for general families of $m_R$-primary ideals when $R$ is a regular local ring containing a field in \cite{Mus} and when $R$ is a local domain which is essentially of finite type over an algebraically closed field $k$ with $R/m_R=k$ in Theorem 3.8 \cite{LM}. 

\begin{proof} 
There exists $c\in\ZZ_+$ such that
$m_R^{c}\subset I_1$. 

We first prove the theorem when $R$ satisfies the assumptions of Theorem \ref{Theorem1}.
Let $\nu$ be the valuation of $Q(R)$ constructed from $S$ in the proof of Theorem \ref{Theorem1}, with associated valuation ideals $K_{\lambda}$ in the valuation ring $V_{\nu}$ of $\nu$. 

Apply Lemma \ref{Lemma7} if $R$ is not regular, to find
$\alpha\in \ZZ_+$ such that 
$$
K_{\alpha n}\cap R\subset m_R^n
$$
for all $n\in \NN$. When $R$ is regular, so that $R=S$, the existence of such an $\alpha$ follows directly from (\ref{eq10}).
We will use the function $\phi:R\setminus\{0\}\rightarrow \NN^{d+1}$ of the proof of Theorem \ref{Theorem1}.

We have that 
$$
K_{\alpha c n}\cap R \subset m_R^{cn}\subset I_n
$$
 for all $n$.

Let
$$
\Gamma(I_*)=\{(n_1,\ldots,n_d,i)\mid (n_1,\ldots,n_d)\in \phi(I_i)\mbox{ and }n_1+\cdots+n_d\le\alpha c i\},
$$
and
$$
\Gamma(R)=\{(n_1,\ldots,n_d,i)\mid (n_1,\ldots,n_d)\in \phi(R)\mbox{ and }n_1+\cdots+n_d\le\alpha c i\}.
$$
As in the proof of Theorem \ref{Theorem1}, $\Gamma(I_*)$ and $\Gamma(R)$ satisfy the conditions (\ref{Cone1}) - (\ref{Cone3}).
For fixed $p\in \ZZ_+$, let
$$
\Gamma(I_*)(p)=\{(n_1,\ldots,n_d,kp)\mid (n_1,\ldots,n_d)\in \phi(I_p^k)\mbox{ and }n_1+\cdots+n_d\le\alpha c kp\}.
$$
We have inclusions of semigroups
$$
k\Gamma(I_*)_p\subset \Gamma(I_*)(p)_{kp}\subset \Gamma(I_*)_{kp}
$$
for all $p$ and $k$.

By Theorem \ref{ConeTheorem2}, given $\epsilon>0$, there exists $p_0$ such that $p\ge p_0$ implies
$$
{\rm vol}(\Delta(\Gamma(I_*))-\epsilon\le \lim_{k\rightarrow \infty}\frac{\#k\Gamma(I_*)_p}{k^dp^d}.
$$
Thus
$$
{\rm vol}(\Delta(\Gamma(I_*))-\epsilon\le \lim_{k\rightarrow\infty}\frac{\#\Gamma(I_*)(p)_{kp}}{k^dp^d}
\le{\rm vol}(\Delta(\Gamma(I_*)).
$$
Again by Theorem \ref{ConeTheorem2}, we can choose $p_0$ sufficiently large that we also have that
$$
{\rm vol}(\Delta(\Gamma(R))-\epsilon\le\lim_{k\rightarrow \infty}\frac{\#\Gamma(R)_{kp}}{k^dp^d}\le{\rm vol}(\Delta(\Gamma)).
$$
Now 
$$
\ell_R(R/I_p^k)=\#\Gamma(R)_{pk}-\#\Gamma(I_*)(p)_{kp}
$$
and 
$$
\ell_R(R/I_n)=\#\Gamma(R)_n-\#\Gamma(I_*)_n.
$$
By Theorem \ref{ConeTheorem1},
$$
\lim_{n\rightarrow \infty}\frac{\ell_R(R/I_n)}{n^d}={\rm vol}(\Delta(\Gamma(R))-{\rm vol}(\Delta(\Gamma(I_*))).
$$
Thus
$$
\lim_{n\rightarrow\infty}\frac{\ell_R(R/I_n)}{n^d}-\epsilon\le \lim_{k\rightarrow\infty}\frac{\ell_R(R/I_p^k)}{k^dp^d}
=\frac{e(I_p)}{d!p^d}\le \lim_{n\rightarrow \infty}\frac{\ell_R(R/I_n)}{n^d}+\epsilon.
$$
Taking the limit as $p\rightarrow \infty$, we obtain the conclusions of the theorem.

Now assume that $R$ is general, satisfying the assumptions of the theorem. 
We reduce to the above case by a series of reductions, first taking the completion of $R$, then moding out by minimal primes, and by taking a base extension by
a finite Galois extension.

The proof thus reduces to showing that 
$$
\lim_{p\rightarrow \infty} \frac{e_d(I_p,R)}{p^d}=\lim_{n\rightarrow \infty}\frac{\ell_R(R/I_n)}{n^d/d!}
$$
in each of the following cases:

\begin{enumerate}
\item[a)] 
$$
\lim_{p\rightarrow \infty} \frac{e_d(I_p\hat R,\hat R)}{p^d}=\lim_{n\rightarrow \infty}\frac{\ell_{\hat R}(\hat R/I_n\hat R)}{n^d/d!}.
$$
\item[b)] Suppose that the minimal primes of (the reduced ring) $R$ are $\{q_1,\ldots, q_s\}$. Let  $R_i=R/q_i$ and suppose that 
$$
\lim_{p\rightarrow \infty} \frac{e_d(I_pR_i,R_i)}{p^d}=\lim_{n\rightarrow \infty}\frac{\ell_{R_i}(R_i/I_nR_i)}{n^d/d!}
$$
for all $i$.
\item[c)] Suppose that $k\subset R$ is a field and $k'$ is a finite Galois extension of $k$ containing $R/m_R$. Let $\{p_1,\ldots,p_r\}$ be the maximal ideals of the semi-local ring $S=R\otimes_kk'$. Suppose that 
$$
\lim_{p\rightarrow \infty} \frac{e_d(I_pS_{p_i},S_{p_i})}{p^d}=\lim_{n\rightarrow \infty}\frac{\ell_{S_{p_i}}(S_{p_i}/I_nS_{p_i})}{n^d/d!}.
$$
for all $i$.
\end{enumerate}

Recall that 
$$
\frac{e_d(I_p,R)}{d!}=\lim_{k\rightarrow \infty}\frac{\ell_R(R/I_p^k)}{k^d}.
$$

Case a) follows since 
$$
\ell_R(R/I_p^k)=\ell_{\hat R}(\hat R/I_p^k\hat R)
$$
for all $p,k$.

In Case b), we have that
$$
\frac{e_d(I_p,R)}{p^d}=\sum_{i=1}^s\frac{e_d(I_pR_i,R_i)}{p^d}
$$
by the additivity  formula (page V-3 \cite{Se} or Corollary 4.6.8, page 189 \cite{BH}) or directly from Lemma \ref{Lemma5}. Case b) thus follows from  the fact that
$$
\lim_{n\rightarrow\infty}\frac{\ell_R(R/I_n)}{n^d}=\sum_{i=1}^s\lim_{k\rightarrow\infty}\frac{\ell_R(R_i/I_nR_i)}{n^d}
$$
by Lemma \ref{Lemma5}.

In Case c) we have that $k'$ is Galois over $k$, so that $S/p_i\cong k'$ for all $i$ by Remark \ref{Remark4}. Thus Lemma \ref{Lemma2} becomes
$$
\ell_R(R/I_p^k)=\sum_{i=1}^r\ell_{S_{p_i}}(S_{p_i}/I_p^kS_{p_i})
$$
for all $p,k$, from which this case follows.

\end{proof}

Suppose that $R$ is a Noetherian ring, and $\{I_i\}$ is a graded  family of ideals in $R$.
Let
$$
s=s(I_*)=\limsup \dim R/ I_i.
$$
Let $i_0\in \ZZ_+$ be the smallest integer such that
\begin{equation}\label{eq40}
\mbox{$\dim R/I_i \le  s$ for $i\ge i_0$.}
\end{equation}
For $i\ge i_0$ and $p$ a prime ideal in $R$ such that $\dim R/p=s$, we have that $(I_i)_p=R_p$ or $(I_i)_p$ is $p_p$-primary.

$s$ is in general not a limit, as is shown by the following simple example.

\begin{Example} Suppose that $R$ is a Noetherian  ring and $p\subset q\subset R$ are prime ideals. Let
$$
I_i=\left\{\begin{array}{ll}
p&\mbox{ if $i$ is odd}\\
q&\mbox{ if $i$ is even}
\end{array}\right.
$$ 
We have that 
$$
I_iI_j=\left\{\begin{array}{ll}
p^2\mbox{ or }q^2&\mbox{ if $i+j$ is even}\\
pq&\mbox{ if $i+j$ is odd.}
\end{array}\right.
$$
Thus $I_iI_j\subset I_{i+j}$ for all $i,j$, and
$$
\dim R/I_i=\left\{\begin{array}{ll}
\dim R/p&\mbox{ if $i$ is odd}\\
\dim R/q&\mbox{ if $i$ is even}
\end{array}
\right.
$$
\end{Example}

Let
$$
T=T(I_*)=\{p\in \mbox{spec}(R)\mid \dim R/p=s\mbox{ and there exist arbitrarily large $j$ such that $(I_j)_p\ne R_p$}\}.
$$

\begin{Lemma}\label{Lemma10}  $T(I_*)$ is a finite set.
\end{Lemma}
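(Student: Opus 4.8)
The plan is to identify $T(I_*)$ with the set of top-dimensional points of a closed subset of $\mbox{Spec}(R)$ built out of the subvarieties $V(I_j)$, and then to invoke Noetherianity.

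First I would extract from $I_jI_{j'}\subseteq I_{j+j'}$ the elementary fact that, for a prime ideal $\mathfrak p$, $I_{j+j'}\subseteq\mathfrak p$ implies $I_jI_{j'}\subseteq\mathfrak p$ and hence $I_j\subseteq\mathfrak p$ or $I_{j'}\subseteq\mathfrak p$; equivalently $V(I_{j+j'})\subseteq V(I_j)\cup V(I_{j'})$. Iterating this (repeatedly splitting $j=N+(j-N)$ while $j-N\ge N$) yields, for every $N\ge 1$ and every $j\ge N$, the inclusion $V(I_j)\subseteq\bigcup_{k=N}^{2N-1}V(I_k)$, so that
$$
\bigcup_{j\ge N}V(I_j)=\bigcup_{k=N}^{2N-1}V(I_k).
$$
The right-hand side is a \emph{finite} union of closed sets, hence closed. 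This passage to a finite union is the step I expect to be the main obstacle; everything after it is bookkeeping.

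Next, the closed sets $\bigcup_{j\ge N}V(I_j)$ decrease with $N$, so by the descending chain condition in the Noetherian space $\mbox{Spec}(R)$ they stabilize: there is $N_0\ge i_0$ with
$$
W:=\bigcup_{j\ge N_0}V(I_j)=\bigcap_{N\ge 1}\bigcup_{j\ge N}V(I_j).
$$
By construction a prime $p$ lies in $W$ precisely when $I_j\subseteq p$ for infinitely many $j$, i.e. precisely when there are arbitrarily large $j$ with $(I_j)_p\ne R_p$; hence $T(I_*)=\{p\in W:\dim R/p=s\}$. Write $W=V(\mathfrak a)$ with $\mathfrak a=\bigcap_{k=N_0}^{2N_0-1}I_k$. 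Every minimal prime $\mathfrak r$ of $\mathfrak a$ is a minimal prime of $I_k$ for some $k$ with $N_0\le k\le 2N_0-1$, and $N_0\ge i_0$, so by (\ref{eq40}) we get $\dim R/\mathfrak r\le\dim R/I_k\le s$; thus $W$ has only finitely many irreducible components, each of dimension at most $s$.

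Finally, if $p\in T(I_*)$ then $p\supseteq\mathfrak a$, so $p$ contains one of the finitely many minimal primes $\mathfrak r$ of $\mathfrak a$. Then $s=\dim R/p\le\dim R/\mathfrak r\le s$ forces $\dim R/\mathfrak r=\dim R/p$, and since a strict inclusion $\mathfrak r\subsetneq p$ would make $\dim R/\mathfrak r$ strictly larger than $\dim R/p$, we conclude $p=\mathfrak r$. Hence $T(I_*)$ is contained in the finite set of minimal primes of $\mathfrak a$, and is therefore finite. Granting the closedness of $\bigcup_{j\ge N}V(I_j)$ established above, the Noetherian descending-chain argument and the dimension count are entirely routine.
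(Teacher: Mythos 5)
Your proof is correct and, at its core, uses the same key idea as the paper: the Euclidean‑division trick (writing $j=aN+r$ with $N\le r\le 2N-1$) together with $I_N^a I_r\subseteq I_j$ and primeness, which forces any prime containing $I_j$ (for $j\ge N$) to contain some $I_k$ with $k$ in the fixed window $[N,2N-1]$. The paper applies this directly with $N=i_0$ to conclude that each $q\in T$ is a minimal (hence associated) prime of one of $I_{i_0},\ldots,I_{2i_0-1}$, and stops there. You instead package the same observation as the closedness of $\bigcup_{j\ge N}V(I_j)$ and identify $W=V(\mathfrak a)$ with $\mathfrak a=\bigcap_{k=N_0}^{2N_0-1}I_k$; this is a mildly more topological phrasing of the same argument. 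One small remark: the appeal to the descending chain condition is a detour you do not actually need. Since the equality $\bigcup_{j\ge N}V(I_j)=\bigcup_{k=N}^{2N-1}V(I_k)$ holds for every $N$, you may simply take $N_0=i_0$ outright; $T(I_*)$ is already contained in the set of $s$-dimensional points of $W_{i_0}=V(\bigcap_{k=i_0}^{2i_0-1}I_k)$, and your dimension count (using that in a local ring a strict inclusion of primes strictly drops $\dim R/\mathfrak p$) then finishes exactly as you wrote. So the DCC buys nothing here, though invoking it is harmless and correct.
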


\begin{proof} Let $U$ be the set of prime ideals $p$ of $R$ which are an associated prime of some $I_i$ with $i_0\le i\le 2i_0-1$, and ${\rm ht}\,  p=s$.
Suppose that $q\in T$. There exists $j\ge i_0$ such that $(I_j)_q\ne R_q$. We can write $j=ai_0+(i_0+k)$ with  $0\le k\le i_0-1$ and $a\ge 0$. Thus $I_{i_0}^aI_{i_0+k}\subset I_j$. Thus $q\in U$ since $(I_{i_0}^aI_{i_0+k})_q\ne R_q$.
\end{proof}

\begin{Lemma}\label{Lemma11} There exist $c=c(I_*)\in \ZZ_+$ such that if $j\ge i_0$ and $p\in T(I_*)$, then 
$$
p^{jc}R_p\subset I_jR_p.
$$
\end{Lemma}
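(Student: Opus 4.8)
The plan is to handle each prime $p\in T(I_*)$ separately (there are only finitely many by Lemma \ref{Lemma10}, so taking a maximum of the resulting constants at the end will finish the proof) and to reduce the statement to the uniform Artin--Rees behaviour of the powers of $I_{i_0}$ together with the fact that $I_{i_0}$ localizes to a $p_p$-primary (or unit) ideal. Fix $p\in T(I_*)$ and set $A=R_p$, a Noetherian local ring with maximal ideal $\mathfrak n=pR_p$. First I would observe that for $j\ge i_0$ we may write $j=ai_0+r$ with $a\ge 1$ and $i_0\le r\le 2i_0-1$ (this is possible once $j\ge 2i_0$; the finitely many remaining values $i_0\le j<2i_0$ can be absorbed into the constant $c$ at the end). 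Then $I_{i_0}^{a}I_r\subset I_j$, so it suffices to bound $\mathfrak n^{jc}$ inside $(I_{i_0}^{a}I_r)A$.

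Next I would use that each of $I_{i_0}A,\ldots,I_{2i_0-1}A$ is either all of $A$ or $\mathfrak n$-primary: indeed, $p\in T(I_*)$ has $\dim R/p=s$ and by \eqref{eq40} every $I_i$ with $i\ge i_0$ satisfies $\dim R/I_i\le s$, so after localizing at $p$ either $(I_i)_p=A$ or $(I_i)_p$ is $\mathfrak n$-primary. In particular there is a single $t\in\ZZ_+$ with $\mathfrak n^{t}\subset I_{i_0}A\cap I_{i_0+1}A\cap\cdots\cap I_{2i_0-1}A$ (take $t$ to be the max of the finitely many exponents, using $\mathfrak n^{t_i}\subset I_iA$ when $(I_i)_p$ is $\mathfrak n$-primary, and noting the unit case only helps). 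Consequently $\mathfrak n^{t a}\subset (I_{i_0}A)^{a}$ and $\mathfrak n^{t}\subset I_rA$, hence $\mathfrak n^{t(a+1)}\subset I_{i_0}^{a}I_r\,A\subset I_jA$. Since $j=ai_0+r\ge ai_0$, we have $a\le j/i_0$, and similarly $t(a+1)\le t(j/i_0+1)\le t(2j/i_0)$ once $j\ge i_0$. Thus $\mathfrak n^{(2t/i_0)j}\subset I_jA$ for all $j\ge 2i_0$, and enlarging the exponent to clear denominators and to cover $i_0\le j<2i_0$ we obtain a constant $c_p$ with $p^{jc_p}R_p\subset I_jR_p$ for all $j\ge i_0$.

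Finally I would set $c=c(I_*)=\max\{c_p\mid p\in T(I_*)\}$, which is a well-defined positive integer because $T(I_*)$ is finite by Lemma \ref{Lemma10}; this $c$ works simultaneously for all $p\in T(I_*)$, proving the lemma. The only point requiring any care — and the step I expect to be the main (mild) obstacle — is the bookkeeping that converts "$\mathfrak n^{t}$ lies in each of finitely many $I_iA$" into a clean linear bound $p^{jc}R_p\subset I_jR_p$ valid for every $j\ge i_0$, including the edge values $i_0\le j<2i_0$ where the decomposition $j=ai_0+r$ with $a\ge 1$ is unavailable; but those are finitely many $j$, each with $(I_j)_p$ either a unit ideal or $\mathfrak n$-primary, so they contribute only finitely many exponent constraints that are swallowed by the maximum defining $c$.
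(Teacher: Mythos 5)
Your proof is correct and follows essentially the same approach as the paper: decompose $j$ as roughly $ai_0 + r$ with $r$ in the window $[i_0, 2i_0-1]$, use $I_{i_0}^a I_r \subset I_j$ together with the fact that each $(I_i)_p$ for $i$ in that window is either the unit ideal or $p_p$-primary (giving a uniform exponent bound over the finitely many primes in $T(I_*)$), and then convert the resulting bound into a linear one in $j$. The only cosmetic differences are that the paper allows $t\ge 0$ in the decomposition $j=ti_0+(i_0+k)$ so that the edge cases $i_0\le j<2i_0$ need no separate treatment, and it fixes the uniform exponent $a$ over all $p\in T$ at the outset rather than maximizing per-prime constants at the end.
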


\begin{proof} There exists $a\in \ZZ_+$ such that for all $p\in T$, $p_p^a\subset (I_i)_p$ for $i_0\le i\le 2i_0-1$.

Write $j=ti_0+(i_0+k)$ with $t\ge 0$ $0\le k\le i_0-1$. Then
$$
p_p^{(t+1)a}\subset I_{i_0}^tI_{i_0+k}R_p\subset I_jR_p.
$$
Let $c=\lceil \frac{a}{i_0}\rceil+a$. 
$$
jc\ge a+j\frac{a}{i_0}=a+(ti_0+i_0+k)\frac{a}{i_0}
\ge (t+1)a.
$$
Thus $p_p^{jc}\subset p_p^{(t+1)a}\subset (I_j)_p$.

\end{proof}

Let 
$$
A(I_*)=\{q\in T(I_*)\mid  \mbox{$I_nR_q$ is $q_q$-primary for  $n\ge i_0$}\}.
$$

\begin{Lemma}\label{Lemma50} Suppose that $q\in T(I_*)\setminus A(I_*)$. Then there exists $b\in \ZZ_+$ such that $q_q^b\subset (I_n)_q$ for all $n\ge i_0$.
\end{Lemma}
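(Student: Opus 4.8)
The plan is to exploit the failure of $q\in A(I_*)$ to produce a single index at which the family becomes locally trivial at $q$, and then to propagate this to all $n\ge i_0$ using the semigroup inclusions $I_iI_j\subset I_{i+j}$ together with Lemma~\ref{Lemma11}. Throughout, let $c=c(I_*)$ be the constant of Lemma~\ref{Lemma11}. The first step is to observe that, since $q\in T(I_*)\setminus A(I_*)$, there is an index $n_1\ge i_0$ for which $(I_{n_1})_q$ is \emph{not} $q_q$-primary. Applying Lemma~\ref{Lemma11} with $j=n_1$ (legitimate because $q\in T(I_*)$) gives $q_q^{\,n_1c}\subset (I_{n_1})_q$. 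In the Noetherian local ring $R_q$ any proper ideal containing a power of the maximal ideal has radical $q_q$ and is hence $q_q$-primary; since $(I_{n_1})_q$ is not $q_q$-primary it cannot be proper, so $(I_{n_1})_q=R_q$.

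Next, given an arbitrary $n\ge i_0$, I would divide by $n_1$: write $n=mn_1+n'$ with $m=\lfloor (n-i_0)/n_1\rfloor\ge 0$, so that $i_0\le n'\le i_0+n_1-1$. Iterating the semigroup inclusion yields $I_{n'}I_{n_1}^{\,m}\subset I_n$, and localizing at $q$ and using $(I_{n_1})_q=R_q$ gives $(I_{n'})_q=(I_{n'})_q(I_{n_1})_q^{\,m}\subset (I_n)_q$. Applying Lemma~\ref{Lemma11} once more, now with $j=n'\ge i_0$, we get $q_q^{\,n'c}\subset (I_{n'})_q\subset (I_n)_q$, and since $n'\le i_0+n_1-1$ this forces $q_q^{\,(i_0+n_1-1)c}\subset (I_n)_q$. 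Hence $b=(i_0+n_1-1)c$ works uniformly for all $n\ge i_0$.

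The argument is essentially bookkeeping, so I do not expect a genuine obstacle; the one conceptual point is the dichotomy in the first step (a proper ideal of $R_q$ containing a power of $q_q$ is automatically $q_q$-primary), which is exactly where Lemma~\ref{Lemma11} supplies the needed power of $q_q$. The only thing requiring a little care is choosing the Euclidean division so that $n'$ stays in the range $i_0\le n'\le i_0+n_1-1$, which is needed both for Lemma~\ref{Lemma11} to apply to $n'$ and for $n'c$ to remain bounded independently of $n$.
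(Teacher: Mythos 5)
Your proof is correct and takes essentially the same route as the paper: deduce from $q\notin A(I_*)$ and the primary-or-unit dichotomy that $(I_{n_1})_q=R_q$ for some $n_1\ge i_0$, then use the semigroup property and Euclidean division by $n_1$ to reduce to a bounded window of indices. Your one refinement over the paper's write-up is keeping the remainder $n'$ in the window $[i_0,\,i_0+n_1-1]$ rather than $[0,\,n_0-1]$, which lets Lemma~\ref{Lemma11} apply directly and yields the explicit bound $b=(i_0+n_1-1)c$, whereas the paper's choice of remainder range tacitly requires a further (unstated, though true) argument that $q_q$ is still the radical of $(I_\alpha)_q$ for the possibly occurring values $\alpha<i_0$.
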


\begin{proof} There exists $n_0\in \ZZ_+$ such that $n_0\ge i_0$ and $(I_{n_0})_q=R_q$. Let $b\in \ZZ_+$ be such that $q_q^b\subset (I_n)_q$ for $0\le n<n_0$. 
Suppose that $i_0\le n\ge n_0$. Write $n=\beta n_0+\alpha$ with $\beta\ge 0$ and $0\le \alpha<n_0$. Then
$$
q_q^b\subset (I_{n_0}^{\beta}I_{\alpha})_q\subset (I_n)_q.
$$
\end{proof}

We obtain the following asymptotic additivity formula.

\begin{Theorem}\label{Theorem3} Suppose that $R$ is a $d$-dimensional local ring such that either 
\begin{enumerate}
\item[1)] $R$ is regular or 
\item[2)] $R$ is analytically unramified of equicharacteristic 0.
\end{enumerate}
Suppose that $\{I_i\}$ is a graded family of  ideals in $R$. Let
$s=s(I_*)=\limsup \dim R/I_i$ and $A=A(I_*)$. Suppose that $s<d$. Then 
$$
\lim_{n\rightarrow \infty}\frac{e_{s}(m_R,R/I_n)}{n^{d-s}/(d-s)!}=\sum_{q\in A}\left(\lim_{k\rightarrow \infty}\frac{e((I_k)_q)}{k^{d-s}}\right) e(m_{R/q}).
$$
\end{Theorem}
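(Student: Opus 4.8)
The plan is to apply the associativity (additivity) formula for Hilbert--Samuel multiplicity and then invoke Theorem~\ref{Theorem5} on the localizations $R_q$ for $q$ in the finite set $A=A(I_*)$. Fix $n\ge i_0$, so $\dim R/I_n\le s$. If $\dim R/I_n=s$, the associativity formula (page~V--3 of \cite{Se}, or Corollary~4.6.8 of \cite{BH}) gives
\[
e_s(m_R,R/I_n)=\sum_q \ell_{R_q}((R/I_n)_q)\,e(m_{R/q}),
\]
the sum being over the finitely many primes $q$ with $\dim R/q=s$ that are minimal over $I_n$; if $\dim R/I_n<s$ both sides vanish. In either case we may let $q$ range over \emph{all} primes with $\dim R/q=s$: a term with $I_n\not\subseteq q$ has $(I_n)_q=R_q$ and contributes $0$, and any prime $q$ with $\dim R/q=s$ containing $I_n$ is automatically minimal over $I_n$ since $\dim R/I_n\le s$.

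Next I would shrink the index set to $A$. Writing $n=(a+1)i_0+k$ with $0\le k\le i_0-1$ one has $I_{i_0}^{a}I_{i_0+k}\subseteq I_n$, so, exactly as in the proof of Lemma~\ref{Lemma10}, any prime $q$ with $\dim R/q=s$ containing $I_n$ must contain $I_{i_0}$ or $I_{i_0+k}$ and is therefore a top-dimensional minimal prime of $I_i$ for some $i_0\le i\le 2i_0-1$; hence all primes occurring in the sum, over all $n\ge i_0$, lie in one finite set $P\supseteq T(I_*)$. For $q\in P\setminus T(I_*)$ there are only finitely many $n$ with $I_n\subseteq q$, so beyond some $i_1$ the sum runs over $T(I_*)$. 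For $q\in T(I_*)\setminus A$, Lemma~\ref{Lemma50} gives $b$ with $q_q^{\,b}\subseteq (I_n)_q$ for all $n\ge i_0$, so $\ell_{R_q}((R/I_n)_q)\le \ell_{R_q}(R_q/q_q^{\,b})$ is bounded; as $d-s\ge 1$ these terms are $o(n^{d-s})$. Thus for $n\ge i_1$,
\[
e_s(m_R,R/I_n)=\sum_{q\in A}\ell_{R_q}((R/I_n)_q)\,e(m_{R/q})+o(n^{d-s}).
\]

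Now fix $q\in A$. The family $\{(I_n)_q\}$ is a graded family of ideals in $R_q$ that is $m_{R_q}$-primary for $n\ge i_0$, with $m_{R_q}^{\,nc}\subseteq (I_n)_q$ for $n\ge i_0$ by Lemma~\ref{Lemma11}. In case~1) $R_q$ is regular; in case~2), after replacing $R$ by its completion --- which is reduced and excellent, leaves $e_s(m_R,R/I_n)$ unchanged, and, by reductions as in the proof of Theorem~\ref{Theorem2}, leaves the right-hand side of the asserted formula unchanged --- we may take $R$ reduced and excellent, so $R_q$ is reduced and excellent, hence analytically unramified, with a characteristic-zero and thus perfect residue field. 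In either case Theorem~\ref{Theorem5} applies to $R_q$ and $\{(I_n)_q\}$ (its proof uses only the tail of the family, the uniform bound of Lemma~\ref{Lemma11} playing the role of $m_R^c\subseteq I_1$), giving
\[
\lim_{n\to\infty}\frac{\ell_{R_q}((R/I_n)_q)}{n^{\dim R_q}/(\dim R_q)!}=\lim_{k\to\infty}\frac{e((I_k)_q)}{k^{\dim R_q}}.
\]
When $\dim R_q=d-s$ this is exactly the $q$-term of the desired formula; when $\dim R_q<d-s$ (which can occur if $R$ is not equidimensional) the bound $m_{R_q}^{\,nc}\subseteq (I_n)_q$ forces $\ell_{R_q}((R/I_n)_q)=O(n^{\dim R_q})=o(n^{d-s})$ and $e((I_k)_q)=O(k^{\dim R_q})$, so $q$ contributes $0$ to both sides and may be dropped. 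Summing over $q\in A$ and dividing by $n^{d-s}/(d-s)!$ yields the theorem.

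I expect the main obstacle to be the second step: controlling, uniformly in $n$, precisely which primes enter the associativity formula, and showing that every contribution from a prime outside $A$ --- and from a prime $q\in A$ with $\dim R_q<d-s$ --- is $o(n^{d-s})$; this is exactly what Lemmas~\ref{Lemma10}, \ref{Lemma11} and \ref{Lemma50} are built to handle. A secondary point needing care is the reduction in case~2): ``analytically unramified'' is not in general inherited by localizations, so one genuinely passes through the completion (which is excellent) and must check that neither side of the formula changes under that passage.
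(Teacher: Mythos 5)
Your argument follows the paper's proof in outline: apply the associativity formula, confine the sum to the finite set $T(I_*)$ via the argument of Lemma~\ref{Lemma10}, discard $T(I_*)\setminus A$ via Lemma~\ref{Lemma50} and the hypothesis $s<d$, and invoke Theorem~\ref{Theorem5} on $R_q$ for $q\in A$. The one substantive divergence is in case~2): you assert that analytic unramifiedness is not inherited by localization and therefore route through $\hat R$, whereas the paper cites \cite{R3} and Proposition~9.1.4 of \cite{SH} to conclude directly that $R_q$ is analytically unramified. The latter is in fact correct: Rees' characterization of analytically unramified local rings via a uniform linear bound $\overline{I^{\,n}}\subseteq I^{\,n-c}$ localizes, because integral closure of ideals commutes with localization. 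Your detour is therefore unnecessary, and it would cost you an extra argument --- relating the primes of $\hat R$ lying over each $q\in A$ to the $q$-term of the right-hand side --- which you flag but do not carry out. Your explicit treatment of the two edge cases (primes in $P\setminus T(I_*)$ contributing for only finitely many $n$, and $\dim R_q<d-s$ when $R$ is not equidimensional) is correct and somewhat more careful than the paper's exposition, which leaves both points implicit.
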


\begin{proof} 
 Let $i_0$ be the (smallest) constant satisfying (\ref{eq40}). By the additivity formula (V-2 \cite{Se} or Corollary 4.6.8, page 189 \cite{BH}), for $i\ge i_0$,
$$
e_{s}(m_R,R/I_i)=\sum_p\ell_{R_p}(R_p/(I_i)_p)e_{m_R}(R/p)
$$
where the sum is over all prime ideals $p$ of $R$ with $\dim R/p=s$. By Lemma \ref{Lemma10}, for $i\ge i_0$,
the sum is actually over the finite set $T(I_*)$ of prime ideals of $R$.

For $p \in T(I_*)$, $R_p$ is a local ring of dimension $\le d-s$. Further, $R_p$  is analytically unramified
(by \cite{R3} or Prop 9.1.4 \cite{SH}). By Lemma \ref{Lemma11}, and by Theorem \ref{Theorem0} or Theorem \ref{Theorem2},  replacing $(I_i)_p$ with $p_p^{ic}$ if $i<i_0$, we have that
$$
\lim_{i\rightarrow \infty}\frac{\ell_{R_p}(R_p/(I_i)_p)}{i^{d-s}}
$$
exists. Further, this limit is zero if $p\in T(I_*)\setminus A(I_*)$ by Lemma \ref{Lemma50}, and since $s<d$. Finally, we have
$$
\lim_{i\rightarrow \infty}\frac{\ell_{R_q}(R_q/(I_i)_q)}{i^{d-s}/(d-s)!}=\lim_{k\rightarrow \infty}\frac{e_{(I_k)_q}(R_q)}{k^{d-s}}
$$
for $q\in A(I_*)$ by Theorem \ref{Theorem5}.

\end{proof} 

\section{An application to growth of valuation semigroups}

As a consequence of our main result, we obtain the following theorem which gives a positive answer to a question raised in a recent paper by the author,
Kia Dalili and Olga Kashcheyeva \cite{CDK}. This formula was established if $R$ is a regular local ring of dimension 2 with algebraically closed residue field in \cite{CDK}, and if
$R$ is an arbitrary  regular local ring of dimension 2 in \cite{CV} using a detailed analysis of a generating sequence associated to the valuation.
A valuation $\omega$ dominating a local domain $R$ is zero dimensional if the residue field of $\omega$ is algebraic over $R/m_R$.

\begin{Theorem}\label{Theorem8} Suppose that $R$ is a  regular local ring or an analytically unramified  local domain. Further suppose that $R$ has an  algebraically closed residue field. Let $d>0$ be the dimension of $R$. Let $\omega$ be a zero dimensional rank one valuation of the quotient field of $R$ which dominates $R$. 
Let $S^R(\omega)$ be the semigroup of values of elements of $R$, which can be regarded as an ordered sub semigroup of $\RR_+$. For $n\in \ZZ_+$, define
$$
\phi(n)=|S^R(\omega)\cap (0,n)|.
$$
Then
$$
\lim_{n\rightarrow \infty}\frac{\phi(n)}{n^d}
$$
exists.
\end{Theorem}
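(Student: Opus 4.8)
The plan is to recognize $\phi(n)$ as a colength and then apply the length theorems already proved. For $n\in\NN$ set
$I_n=\{f\in R\mid \omega(f)\ge n\}$. Since $\omega$ dominates $R$, $\omega(f)\ge 0$ for every nonzero $f\in R$, so $I_0=R$; and $\omega(fg)=\omega(f)+\omega(g)$ together with $\omega(f+g)\ge\min\{\omega(f),\omega(g)\}$ gives that each $I_n$ is an ideal and $I_iI_j\subset I_{i+j}$, so $\{I_n\}$ is a graded family of ideals. For $n\ge 1$ we have $1\notin I_n$, while for $f\in m_R$ the value $\omega(f^k)=k\,\omega(f)$ tends to $\infty$ (here $\omega$ has rank one and $\omega(f)>0$), so $f^k\in I_n$ for $k\gg 0$; hence $\sqrt{I_n}=m_R$ and $I_n$ is $m_R$-primary. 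Thus $\{I_n\}$ is a graded family of $m_R$-primary ideals and each $\ell_R(R/I_n)$ is finite.

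Next I would compare $\ell_R(R/I_n)$ with $\phi(n)$. Because $\omega$ is zero dimensional, its residue field $V_\omega/m_\omega$ is algebraic over $R/m_R$, and since $R/m_R$ is algebraically closed the two fields coincide. For $v\in\RR_{\ge 0}$ put $P_v=\{f\in R\mid \omega(f)\ge v\}$ and $P_{v^+}=\{f\in R\mid\omega(f)>v\}$. The argument used for the (unlabelled) lemma following Lemma~\ref{Lemma7} shows $P_v/P_{v^+}\cong R/m_R$ when $v\in S^R(\omega)$ and $P_v/P_{v^+}=0$ otherwise: if $f,g$ both have value $v$, a lift $\alpha\in R$ of the residue of $f/g$ in $V_\omega/m_\omega=R/m_R$ satisfies $\omega(f-\alpha g)>v$. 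Consequently the descending chain of the $P_v$ with $v<n$ stabilises after $\ell_R(R/I_n)$ strict steps, and $\ell_R(R/I_n)=\#\bigl(S^R(\omega)\cap[0,n)\bigr)$. Since $\phi(n)=\#\bigl(S^R(\omega)\cap(0,n)\bigr)$, we get $|\,\ell_R(R/I_n)-\phi(n)\,|\le 1$; this is the identity $\phi(n)=\ell_R(R/I_n)$ recorded in \cite{CDK} and \cite{CT}.

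Finally I would invoke the length theorems. If $R$ is a regular local ring, Theorem~\ref{Theorem0} gives that $\lim_{n\to\infty}\ell_R(R/I_n)/n^d$ exists; if $R$ is an analytically unramified local domain with algebraically closed residue field, Theorem~\ref{Theorem6} gives the same conclusion. Since $d>0$ we have $(\ell_R(R/I_n)-\phi(n))/n^d\to 0$, so $\lim_{n\to\infty}\phi(n)/n^d$ exists and equals that limit.

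The entire content is the translation in the middle step; the analytic work is already carried out in Theorems~\ref{Theorem0} and~\ref{Theorem6}. The one place where the hypotheses of this theorem are genuinely used (beyond what those theorems already assume) is the identification of the residue field of $\omega$ with $R/m_R$, which needs both zero-dimensionality of $\omega$ and algebraic closedness of $R/m_R$; without this the graded pieces $P_v/P_{v^+}$ need not be one dimensional over $R/m_R$ and the count of $S^R(\omega)$ would fail. I do not anticipate any obstacle beyond this routine bookkeeping.
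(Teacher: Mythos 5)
Your proof is correct and follows essentially the same route as the paper: define the valuation ideals $I_n=\{f\in R\mid\omega(f)\ge n\}$, observe they form a graded family of $m_R$-primary ideals, identify $\phi(n)$ with $\ell_R(R/I_n)$ up to a bounded error, and invoke Theorem~\ref{Theorem0} or Theorem~\ref{Theorem6}. You supply more detail than the paper on the identification of $\phi(n)$ with a colength (the paper simply cites \cite{CDK} and \cite{CT}), and that elaboration is sound. One small slip: since $0\in S^R(\omega)$ (as $\omega(1)=0$) but $0\notin(0,n)$, the exact identity is $\phi(n)=\ell_R(R/I_n)-1$, not $\phi(n)=\ell_R(R/I_n)$ as you wrote; this is what the paper records. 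Your own computation $\ell_R(R/I_n)=\#\bigl(S^R(\omega)\cap[0,n)\bigr)$ already implies the correct version, and since you only use the bound $|\ell_R(R/I_n)-\phi(n)|\le 1$ the slip does not affect the argument.
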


\begin{proof} Let 
$$
I_n=\{f\in R\mid \omega(f)\ge n\}.
$$
Let 
$$
\lambda=\omega(m_R)=\min\{\omega(f)\mid f\in m_R\}.
$$
Let $c\in \ZZ_+$ be such that $c\lambda>1$. Then $m_R^{c}\subset I_1$.
By Theorem \ref{Theorem0} or  \ref{Theorem6}, we have that 
$$
\lim_{n\rightarrow \infty} \frac{\ell_R(R/I_n)}{n^d}
$$ 
exists.

Since $R$ has algebraically closed residue field, we have by  \cite{CDK} or \cite{CT}, that
$$
\#\phi(n)=\ell_R(R/I_n)-1.
$$
Thus the theorem follows.
\end{proof}

In \cite{CDK} it is shown that the real numbers $\beta$ with $0\le \beta<1/2$ are the limits attained on a regular local ring $R$ of dimension 2.

\end{document}